\newtheorem{theorem}{Theorem}
\newtheorem{corollary}{Corollary}
\newtheorem{proposition}{Proposition}
\theoremstyle{definition}
\newtheorem{openpb}{Open Problem}
\numberwithin{equation}{section}
\def\R{\mathbb{R}}
\newcommand{\ovl}{\overline}
\newcommand{\prt}{\partial}
\newcommand\la{\lambda}
\newcommand\Om{\Omega}
\title[Boundedness of stable solutions to semilinear elliptic equations]
{Boundedness of stable solutions to semilinear elliptic equations: a survey}
\author{Xavier Cabr\'e}
\address{X. Cabr\'e\textsuperscript{1,2}:
\newline
\textsuperscript{1} Universitat Polit\`ecnica de Catalunya, Departament de Matem\`{a}tiques, Diagonal 647, 
08028 Barcelona, Spain
\newline
\textsuperscript{2} ICREA, Pg. Lluis Companys 23, 08010 Barcelona, Spain}
\email{xavier.cabre@upc.edu}
\thanks{The author is supported by MINECO grant MTM2014-52402-C3-1-P
and is part of the Catalan research group 2014 SGR 1083}
\subjclass[2010]{Primary 35K57, 35B65}
\begin{document}

\begin{abstract}
This article is a survey on boundedness results for stable solutions to semilinear elliptic problems.
For these solutions, we present the currently known $L^{\infty}$ estimates that hold for all nonlinearities. 
Such estimates are known to hold up to dimension 4. 
They are expected to be true also in dimensions 5 to 9, but this is still an open problem which has only been
established in the radial case.
\end{abstract}

\date{}
\maketitle

\tableofcontents

\section{Introduction. The known $L^\infty$ estimates}

This article is a survey on regularity results for \emph{stable} solutions of semilinear elliptic problems.
For these solutions, we present the currently known $L^{\infty}$ estimates that hold for all nonlinearities,
or at least for a large class of them. It is Ha\"{\i}m Brezis who stressed, since the mid-nineties, the significance 
of this problem.

Even though we will briefly comment at the end of this introduction on related results for other nonlinear 
elliptic problems, the rest of the paper concerns the semilinear problem
\begin{equation}\label{pb}
\left\{
\begin{array}{rcll}
-\Delta u &=& f(u) & \quad\mbox{in $\Omega$}\\
u &=& 0  & \quad\mbox{on $\prt\Omega$,}\\
\end{array}\right.
\end{equation}
where $\Omega \subset \R^n$ is a smooth bounded domain, $n\geq 1$, and $f$ is a locally Lipschitz nonlinearity.

Following \cite{BCMR}, 
we say that $u$ is a \emph{weak solution} of \eqref{pb} if, with $d_\Om=\textrm{dist}(\cdot,\partial\Om)$, we have
$u\in L^1(\Om)$, $f(u)d_\Om\in L^1(\Om)$, and $-\int_\Om
u\Delta\zeta\,dx=\int_\Om f(u)\zeta\,dx$  for all $\zeta\in C^1(\ovl \Om)$ with $\zeta=0$ on $\prt\Om$.

A weak solution $u$ of \eqref{pb} is said to be \emph{stable} if $f'(u)\in L^1_{\rm loc}(\Om)$ and
\begin{equation} \label{stability}
\int_{\Omega} f'(u)\xi^2\,dx
\leq
\int_{\Omega} |\nabla\xi|^2\,dx \quad\textrm{for all } \xi \in C^1_c(\Omega),
\end{equation}
i.e., for all $C^1$ functions $\xi$ with compact support in $\Om$.
If the solution $u$ is bounded, its stability is equivalent to the nonnegativeness of the first 
Dirichlet eigenvalue in~$\Om$ of the linearized operator $-\Delta-f'(u)$ for \eqref{pb} at $u$.
As a consequence, \emph{local minimizers} of the energy (i.e., minimizers under small perturbations) are stable
solutions ---whenever the energy and all objects are well defined within a proper functional analytic framework.

In dimensions 10 and higher, there are explicit examples of unbounded (also called singular) stable solutions. 
The following is one of them.

\begin{proposition}\label{proplog}
For $n\geq 3$, the function $\tilde u=-2\log\vert{x}\vert$ is a weak solution of \eqref{pb} in $\Om =B_1$, 
the unit ball, for the nonlinearity $f(u)=2(n-2)e^u$. For $n\geq 10$, $\tilde u$ is an $H^1_0(B_1)$
stable weak solution.
\end{proposition}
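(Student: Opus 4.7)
\medskip

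\noindent\textbf{Proof proposal.} My plan is to split the verification into three steps: the pointwise PDE away from the singularity, the promotion of this to a weak solution on all of $B_1$, and finally the stability inequality.

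First I would compute the Laplacian in the classical sense on $B_1\setminus\{0\}$. A direct calculation gives $\Delta\log|x|=(n-2)/|x|^2$, so that $\Delta\tilde u=-2(n-2)/|x|^2$. On the other hand, $e^{\tilde u}=|x|^{-2}$, and hence $-\Delta\tilde u=2(n-2)e^{\tilde u}$ pointwise in $B_1\setminus\{0\}$. Clearly $\tilde u=0$ on $\partial B_1$. Note the restriction $n\geq 3$ already enters here, since otherwise $\log|x|$ is not even locally integrable enough for the distributional analysis that follows.

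Next, to check the weak formulation from \cite{BCMR}, I would verify the integrability hypotheses: $\tilde u=-2\log|x|\in L^1(B_1)$, and $f(\tilde u)\,d_\Omega=2(n-2)|x|^{-2}d_\Omega\in L^1(B_1)$ (the singularity $|x|^{-2}$ is integrable for $n\geq 3$, and $d_\Omega$ is bounded). For the identity $-\int_{B_1}\tilde u\,\Delta\zeta\,dx=\int_{B_1}f(\tilde u)\,\zeta\,dx$, I would apply Green's second identity on the punctured domain $B_1\setminus B_\varepsilon$, where $\tilde u$ is smooth, and send $\varepsilon\to 0$. The boundary contributions on $\partial B_1$ vanish because $\zeta=0$ there and $\tilde u=0$ there. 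On $\partial B_\varepsilon$, the two surface terms are controlled by
\[
\Bigl|\int_{\partial B_\varepsilon}\tilde u\,\partial_\nu\zeta\,dS\Bigr|\lesssim |\log\varepsilon|\,\varepsilon^{n-1}\quad\text{and}\quad
\Bigl|\int_{\partial B_\varepsilon}\partial_\nu\tilde u\cdot\zeta\,dS\Bigr|\lesssim \varepsilon^{n-2},
\]
both tending to $0$ as $\varepsilon\to 0$ since $n\geq 3$. Combined with the pointwise PDE, this yields the weak formulation.

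For the stability part, I would first observe that $\tilde u\in H^1_0(B_1)$: indeed $|\nabla\tilde u|^2=4/|x|^2$ is integrable on $B_1$ precisely when $n\geq 3$, and $\tilde u$ vanishes (classically) on $\partial B_1$. The stability inequality \eqref{stability} then becomes, with $f'(\tilde u)=2(n-2)/|x|^2$,
\[
2(n-2)\int_{B_1}\frac{\xi^2}{|x|^2}\,dx\leq\int_{B_1}|\nabla\xi|^2\,dx\qquad\text{for all }\xi\in C^1_c(B_1).
\]
This is a direct consequence of the classical Hardy inequality $\int_{\R^n}\xi^2/|x|^2\,dx\leq\bigl(2/(n-2)\bigr)^2\int_{\R^n}|\nabla\xi|^2\,dx$ whenever the constants satisfy $2(n-2)\cdot\bigl(2/(n-2)\bigr)^2\leq 1$, i.e., $n\geq 10$.

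The only subtle point I expect is the passage to the limit in the boundary terms at $\partial B_\varepsilon$: one must be careful with the direction of the outer normal and use that $\zeta\in C^1(\overline{B_1})$ so $\zeta$ and $\nabla\zeta$ are bounded near the origin. Everything else is essentially a computation plus an appeal to Hardy's inequality with sharp constant, which is exactly what forces the threshold $n\geq 10$.
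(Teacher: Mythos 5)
Your proposal is correct and follows the same route as the paper: a direct computation showing $-\Delta\tilde u=2(n-2)|x|^{-2}=2(n-2)e^{\tilde u}$, verification of the weak formulation (which the paper leaves to the reader and you carry out via Green's identity on $B_1\setminus B_\varepsilon$), and stability via Hardy's inequality with the sharp constant $(n-2)^2/4$, which gives exactly the threshold $2(n-2)\le(n-2)^2/4$, i.e., $n\ge 10$.
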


This result is easily proved. One verifies that  $\tilde u$ is a weak solution whenever $n\geq 3$. Next,  
the linearized operator at $\tilde u$ is given by  
$$
-\Delta -2(n-2)e^{\tilde u} =
-\Delta  -\frac{2(n-2)}{\vert{x}\vert^2}.
$$
Thus, if $n\ge 10$ then its first Dirichlet eigenvalue in $B_1$ is nonnegative.
This is a consequence of {\it Hardy's inequality}:
$$
\frac{(n-2)^2}{4}\int_{B_1}\frac{\xi^2}{\vert{x}\vert^2} \;\; \leq\;\;
\int_{B_1} \vert\nabla \xi\vert^2
\qquad\quad
\mbox{ for every } \xi\in H^1_0(B_1),
$$
and the fact that $2(n-2)\leq (n-2)^2/4$ if $n\geq 10$.

Thus, in dimensions $n\geq 10$ there exist unbounded $H^1_0$ stable solutions of \eqref{pb}, 
even in the unit ball and for the exponential nonlinearity. One may wonder if the same could
happen in a lower dimension, perhaps for another nonlinearity and in another domain.
The regularity results that we present next indicate that the dimension~$10$ should be optimal. 
However, \emph{a main open question is still to be answered
in dimensions $5\le n\le 9$} ---see Open Problem \ref{OP1} below. 
Let us emphasize that here we are talking about regularity results that hold for 
all nonlinearities $f$ ---or at least for a very large class of them. 
In particular, we will allow nonlinearities with supercritical growth with respect to the Sobolev exponent, 
as in Proposition~\ref{proplog}. It is therefore the stability of the solution that will lead to $L^\infty$ estimates. 

All this is reminiscent of important results within the theories of minimal surfaces and of harmonic maps (see
the survey \cite{CCsur} for these analogies), in which regularity of stable solutions also holds only in 
certain low dimensions.

There are many nonlinearities for which \eqref{pb} admits a stable weak solution $u$ with $u\not\equiv 0$. 
Indeed, replace $f(u)$ by $\lambda f(u)$
in \eqref{pb}, with $\lambda\geq 0$. That is, consider the problem
\begin{equation}\label{pbla}
\left\{
\begin{array}{rcll}
-\Delta u &=& \la f(u) & \quad\mbox{in $\Omega$}\\
u &=& 0  & \quad\mbox{on $\prt\Omega$.}\\
\end{array}\right.
\end{equation}
Assume that $f$ is positive, nondecreasing, and superlinear at $+\infty$, that is,
\begin{equation} \label{hypf}
f(0) > 0, \quad f'\geq 0 \quad\textrm{and}\quad\lim_{t\to +\infty}\frac{f(t)}t=+\infty.
\end{equation}
Note that in this case we look for positive solutions (when $\la>0$).

Assuming \eqref{hypf}, there exists an
extremal parameter $\lambda^*\in (0,+\infty)$ such that if $0\le\lambda<\lambda^*$ then
\eqref{pbla} admits a minimal classical solution $u_\lambda$. Here minimal means smallest. 
On the other hand,
if $\lambda>\lambda^*$ then \eqref{pbla} has no classical solution.
The family of classical solutions  $\left\{ u_{\lambda }:0\le \lambda <\lambda ^{*}\right\}$ 
is increasing in $\lambda$, and its limit as 
$\lambda\uparrow\lambda^{*}$ is a weak solution $u^*=u_{\lambda^*}$ 
of \eqref{pbla} for $\la=\la^*$. $u^*$ is called \emph{the extremal solution} of \eqref{pbla}.
Moreover, for every $\la\in [0,\la^*]$, $u_\la$ is a \emph{stable} weak solution of \eqref{pbla}. 

The proofs of the previous statements can be found in the book \cite{Dupaigne}
by L. Dupaigne; see chapter~3 for these issues. There, among many others, two further interesting results are proved
under the hypothesis that $f$ is convex and satisfies \eqref{hypf}. First,
the nonexistence of \emph{weak} solutions of \eqref{pbla} for 
$\la>\la^*$ ---a result established in a seminal paper in this topic, \cite{BCMR}, by Brezis, Cazenave, Martel,
and Ramiandrisoa. Second, Martel's result stating that $u^*$ is the unique weak solution of \eqref{pbla} for 
$\la=\la^*$.  The book by Dupaigne also contains proofs
of the statements and estimates that we present below. Another nice monograph in this
subject is due to J. D\'avila~\cite{Davila}.

\subsection{$L^\infty$ estimates}
We now turn to the core of the paper: $L^\infty$ estimates for stable solutions. As usual, the key
point will be to establish \emph{apriori} $L^\infty$ estimates for stable \emph{classical} solutions
of \eqref{pb}. Once this is done, they lead to the boundedness (and thus regularity) of the extremal
solution $u^*$ of \eqref{pbla} ---simply by approximation of $u^*$ by the stable classical solutions
$u_\la$ with $\la\uparrow\la^*$, since the apriori estimates will be uniform in $\la$ as $\la\uparrow\la^*$. 
In this respect, we could now forget about the extremal problem \eqref{pbla}, come back to equation \eqref{pb},
and ask whether a stable weak solution of \eqref{pb} can be approximated by classical solutions
to similar problems. A nice answer was given in \cite{BCMR} (see also Corollary 3.2.1 of 
the monograph~\cite{Dupaigne})
and states that this is always possible if $u\in H^1_0(\Om)$ is a stable weak solution of \eqref{pb}
and $f$ is nonnegative and convex. The approximating functions can be taken to be stable classical solutions
to problem \eqref{pb} with $f$ replaced by $(1-\varepsilon)f$, where $\varepsilon\downarrow 0$.

There is a large literature on a priori estimates for stable solutions, beginning with the seminal
paper of Crandall and Rabinowitz \cite{CR} from 1975. In more recent years there have been strong 
efforts to obtain a priori bounds under minimal assumptions on $f$, mainly after
Brezis and V\'azquez \cite{BV} raised several open questions ---see also the open
problems raised by Brezis in \cite{Brezis}. 

The following three theorems collect all what is currently
known in terms of $L^\infty$ estimates. We will give their proofs
(with most of details) in the next three sections.

We start with a result that suggests the sharpness of dimension 10.
However, it requires the nonlinearity $f$ to be {\it convex}, satisfy \eqref{hypf}, and also that
\begin{equation}\label{tau}
f\in C^{2}(\R) \text{ and the limit } \lim_{t \to +\infty} \frac{f(t)f''(t)}{f'(t)^2} \text{ exists.}
\end{equation}
It is easy to see that if the limit exists, then it must belong to $[0,1]$ ---otherwise~$f$ would blow up somewhere 
in $(0,+\infty)$. These hypotheses are satisfied by many nonlinearities, including 
$f(u)=e^u$, $f(u)=e^{e^{u}}$, as well as $f(u)=(1+u)^p$ with $p>1$.

\begin{theorem}[{\cite{CR,MP,Sanchon,JL}}]
Let $u^*$ be the extremal solution of \eqref{pbla}.

$(a)$ Let $f$ be convex and satisfy \eqref{hypf} and \eqref{tau}. If $n\le 9$, 
then $u^*\in L^\infty(\Om)$.

$(b)$ If $\Om=B_1$, $f(u)=e^u$, and $n\geq 10$, then $u^*=-2\log \vert x\vert$ and $\la^*=2(n-2)$.
\label{thm1}
\end{theorem}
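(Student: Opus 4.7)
The plan is to treat the two parts separately: part (a) via a Crandall--Rabinowitz--type test-function argument in the stability inequality, and part (b) by exploiting the explicit singular solution already furnished by Proposition~\ref{proplog}.

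For part (a), I would first reduce the statement to a uniform a priori $L^\infty$ bound on the family $\{u_\lambda\}_{\lambda<\lambda^*}$ of stable classical minimal solutions of \eqref{pbla}; boundedness of $u^*=\lim_{\lambda\uparrow\lambda^*}u_\lambda$ then follows by monotone convergence. Fixing such a solution $u=u_\lambda$, I would combine two ingredients. First, multiplying the equation $-\Delta u=\lambda f(u)$ by a primitive $G(u)$ with $G'=(g')^2$ and integrating by parts yields the identity $\int_\Omega g'(u)^2|\nabla u|^2\,dx = \lambda\int_\Omega f(u)G(u)\,dx$. Second, inserting $\xi = g(u)$ in the stability inequality \eqref{stability} gives $\int_\Omega f'(u)g(u)^2\,dx \le \int_\Omega g'(u)^2|\nabla u|^2\,dx$. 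Combining the two and choosing $g$ tailored to the asymptotic behavior of $f$ captured by $\tau\in[0,1]$ in \eqref{tau} (e.g.\ $g(u)=e^{\alpha u}-1$ when $f(u)=e^u$, and more generally a suitable power of $f'$) produces, after absorbing lower-order terms by H\"older, an a priori bound $\int_\Omega f(u)^q\,dx \le C$ for an exponent $q=q(\alpha,\tau)$. Optimizing $\alpha$ pushes $q$ above $n/2$ precisely when $n\le 9$; the threshold $10$ arises from essentially the same arithmetic as in the Hardy computation right after Proposition~\ref{proplog}. A final Calder\'on--Zygmund bootstrap applied to $-\Delta u=\lambda f(u)$, together with Sobolev embedding in the regime $q>n/2$, then yields $u\in L^\infty(\Omega)$ uniformly in $\lambda$.

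For part (b), the candidate is already handed to me by Proposition~\ref{proplog}: $\tilde u = -2\log|x|$ is a stable $H^1_0$ weak solution of \eqref{pbla} at $\lambda=2(n-2)$, so existence of this weak solution immediately yields $\lambda^*\ge 2(n-2)$. For the reverse inequality I would argue by contradiction, supposing $\lambda^*>2(n-2)$. Then for every $\lambda'<2(n-2)$ the minimal classical solution $u_{\lambda'}$ exists, and since $\tilde u$ is a supersolution of \eqref{pbla} at parameter $\lambda'$, the monotone iteration constructing $u_{\lambda'}$ forces $u_{\lambda'}\le \tilde u$ on $B_1$. Letting $\lambda'\uparrow 2(n-2)$ (still strictly below $\lambda^*$ by assumption) gives a bounded classical solution $u_{2(n-2)}$ satisfying $u_{2(n-2)}\le \tilde u$. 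Setting $v=\tilde u-u_{2(n-2)}\ge 0$, which belongs to $H^1_0(B_1)$, convexity of the exponential yields $-\Delta v \ge 2(n-2)\, e^{u_{2(n-2)}} v$; testing against $v$ and invoking stability of $u_{2(n-2)}$ forces equality throughout, after which strict convexity of the exponential on $\{v>0\}$ forces $v\equiv 0$. This contradicts the fact that $\tilde u$ is singular while $u_{2(n-2)}$ is bounded, so $\lambda^*=2(n-2)$. Martel's uniqueness of the weak solution at $\lambda^*$, quoted in the introduction, finally identifies $u^*$ with $\tilde u = -2\log|x|$.

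The principal obstacle lies in part (a): carrying out the test-function computation so that the integrability exponent $q$ on $f(u)$ crosses the Sobolev threshold $n/2$ precisely at $n=9$. This is where hypothesis \eqref{tau} is indispensable, and where the papers \cite{CR,MP,Sanchon} provide progressively sharper choices of $g$ covering larger classes of $f$.
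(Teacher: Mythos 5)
For part (a) your strategy is essentially the paper's. The paper writes the stability inequality \eqref{stability} with $\xi$ a nonlinear function of $u$ built from $f$ (for $f(u)=e^u$ it takes $\xi=e^{\alpha u}-1$, integrates by parts against the equation exactly as in your identity with $G'=(g')^2$, obtains $e^u\in L^p$ uniformly for every $p<5$, and concludes by $W^{2,p}$ regularity and Sobolev embedding since $n/2<5$ when $n\le 9$); like you, it defers the general case under hypothesis \eqref{tau} to \cite{Sanchon} via the choices $\xi=(f(u)-f(0))^\alpha$ or $f(u)^\alpha-f(0)^\alpha$. So part (a) is fine at the level of detail the survey itself provides.

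Part (b) is not proved in the paper (it is attributed to \cite{JL,BV}), and your argument for it has a genuine gap at the decisive step. Having set $v=\tilde u-u_{2(n-2)}\ge 0$, you linearize the convexity inequality at the \emph{bounded} solution, $e^{\tilde u}-e^{u}\ge e^{u}v$, getting $-\Delta v\ge 2(n-2)e^{u}v$, and then test against $v$ and invoke the stability of $u=u_{2(n-2)}$. But testing gives $\int|\nabla v|^2\ge 2(n-2)\int e^{u}v^2$, while stability of $u$ applied to $\xi=v$ is precisely the reverse-looking statement $2(n-2)\int e^{u}v^2\le\int|\nabla v|^2$: these are the \emph{same} inequality, so nothing is forced to be an equality and you cannot conclude $v\equiv 0$. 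The argument that works (Brezis--V\'azquez) linearizes at the \emph{singular} solution: convexity gives $e^{\tilde u}-e^{u}\le e^{\tilde u}v$, hence $\int|\nabla v|^2\le 2(n-2)\int e^{\tilde u}v^2=2(n-2)\int |x|^{-2}v^2$, and the reverse inequality is exactly the stability of $\tilde u$, i.e.\ Hardy's inequality with $2(n-2)\le (n-2)^2/4$ for $n\ge 10$ as in Proposition~\ref{proplog}. The forced equality $e^{\tilde u}-e^{u}=e^{\tilde u}v$ together with strict convexity of the exponential then yields $v\equiv 0$, contradicting that $u_{2(n-2)}$ is bounded while $\tilde u$ is not. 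With this replacement, the remaining pieces of your scheme (the comparison $u_{\lambda'}\le\tilde u$ via monotone iteration under a weak supersolution, the nonexistence of weak solutions for $\lambda>\lambda^*$ to get $\lambda^*\ge 2(n-2)$, and Martel's uniqueness at $\lambda^*$ to identify $u^*=\tilde u$) are sound.
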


Note that in part (a) we may take $f(u)=e^u$. Together with part (b) ---that contains more information than 
Proposition~\ref{proplog}--- gives a quite detailed answer to the boundedness of the extremal solution, 
in any dimension, for the exponential nonlinearity.  Part (b) was
first established in \cite{JL} using ODEs techniques, while a more flexible PDE proof was given 
in \cite{BV}.

Part (a) also applies to $f(u)=(1+u)^p$ with $p>1$. For these power nonlinearities, the boundedness of stable solutions
holds indeed for $n\leq 10$ ---and also for $n\geq 11$ and $p$ smaller than the Joseph-Lundgren exponent.
For the rest of couples $(n,p)$, the explicit radial solution is given by 
$|x|^{-2/(p-1)}-1$ and coincides with the extremal solution $u^*$ when $\Om=B_1$. 
See \cite{BV,CC} for more details on this power case.

Part (a) of Theorem \ref{thm1} was proven by
Crandall and Rabinowitz \cite{CR} and by Mignot and Puel \cite{MP}, not only for the exponential nonlinearity
but also for some others. The improved result under assumption \eqref{tau} is due to Sanch\'on~\cite{Sanchon}. 
The proofs use the equation in \eqref{pbla} together with the stability condition \eqref{stability} applied to
\begin{equation}\label{choice1}
\xi:= (f(u)-f(0))^\alpha \quad\text{ or }\quad \xi:= f(u)^\alpha- f(0)^\alpha
\end{equation}
for a well chosen positive exponent $\alpha$. In section~ 2, we will present the proof with 
all details in the case $f(u)=e^u$.

Note that assumption \eqref{tau} on a convex nonlinearity $f$ prevents $f''\ge 0$ to oscillate at infinity.
Think that $f$ could be linear within some intervals approaching infinity, and grow exponentially 
in the complement of the intervals.
We will see that to establish regularity for stable solutions without the additional assumption 
\eqref{tau} on~$f$ is a much harder task. 

However, the radial case $\Om=B_1$ was settled by the author and Capella~\cite{CC} in 2006,
as we state next. Note that the estimates apply to \emph{every nonlinearity} $f$. This work 
does not use phase plane analysis, but PDE techniques instead.

\begin{theorem}[{\cite{CC}}]\label{thm2}
Let $n\ge 1$ and $f:\R\to\R$ be a locally Lipschitz function. 

Let $u\in H^1_0(B_1)$ be a stable radial weak solution of \eqref{pb}.
Then, $u$ is either constant, radially increasing, or radially decreasing in $B_1$. Moreover,

$(a)$ If $n\leq 9$, then $u\in L^\infty (B_1)$ and
\begin{equation}\label{linftyrad}
\left\Vert u\right\Vert _{L^{\infty}(B_{1})}\leq C_{n}
\left( \, \Vert u\Vert _{L^{1}(B_{1})}
+\Vert f(u)d_{B_1}\Vert_{L^{1}(B_{1})}\, \right)
\end{equation}
for some constant $C_n$ depending only on $n$, where $d_{B_1}={\rm dist}(\cdot,\partial B_1)$.

$(b)$ If $n=10$, then $|u(|x|)|\leq C |\log |x|\,|$ in $B_1$
for some constant $C$.

$(c)$ If $n\geq 11$, then $|u(|x|)|\leq C  |x|^{-n/2+\sqrt{n-1}+2} 
|\log |x|\,|^{1/2}$ in $B_1$ for some constant~$C$.
\end{theorem}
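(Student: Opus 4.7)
The plan is to exploit that $w:=u_r$ solves the linearized equation perturbed by a good-sign Hardy potential, and to feed this into the stability inequality \eqref{stability} to produce a weighted Hardy-type inequality that controls $u_r$ near the origin. After a standard approximation by smooth radial stable solutions (obtained by truncating $f$), one may assume $u\in C^2([0,1])$. Differentiating the radial equation $-u''-\frac{n-1}{r}u'=f(u)$ in $r$ yields
\begin{equation}\label{eqw}
-w''-\frac{n-1}{r}w'+\frac{n-1}{r^2}\,w=f'(u)\,w\quad\text{on }(0,1),
\end{equation}
so $w$ solves the linearization of \eqref{pb} \emph{plus} the nonnegative Hardy potential $(n-1)/r^2$. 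Substituting $\xi=w(r)\eta(r)$ into \eqref{stability} and using \eqref{eqw} to eliminate $\int f'(u)w^2\eta^2$, the cross terms cancel and one obtains the core inequality
\begin{equation}\label{keyH}
(n-1)\int_0^1 u_r^2\,\eta^2\,r^{n-3}\,dr \;\le\; \int_0^1 u_r^2\,(\eta')^2\,r^{n-1}\,dr,\qquad \forall\,\eta\in C^1_c([0,1)).
\end{equation}

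For the monotonicity, assume $u\not\equiv\text{const}$ and, for contradiction, let $r_0\in(0,1)$ be the first zero of $w$. Setting $W(x):=w(|x|)$ for $|x|\le r_0$, extended by zero outside, one has $W\in H^1_0(B_{r_0})$ and, by \eqref{eqw}, $-\Delta W-f'(u)W=-(n-1)W/|x|^2$ in $B_{r_0}$. Testing against $W$ itself yields
$$\int_{B_{r_0}}\left(|\nabla W|^2-f'(u)W^2\right)dx = -(n-1)\int_{B_{r_0}}\frac{W^2}{|x|^2}\,dx < 0,$$
so $\lambda_1(-\Delta-f'(u),B_{r_0})<0$. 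By domain monotonicity $\lambda_1(B_{r_0})\ge\lambda_1(B_1)\ge 0$, the last inequality being the stability of $u$ on $B_1$; this contradiction forces $w$ to have a definite sign on $(0,1)$.

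For the $L^\infty$-type bounds, inequality \eqref{keyH} is a weighted Hardy inequality for $u_r$, and a scaling analysis with the ansatz $\eta(r)=r^\gamma$ identifies $\gamma=\sqrt{n-1}$ as the critical exponent (the unique $\gamma>0$ for which the pure monomial saturates both sides). Combining \eqref{keyH} with a weighted $L^2$--$L^\infty$ interpolation, together with standard $W^{2,p}$ estimates on the annulus $\{1/2<|x|<1\}$ that convert the $L^1(B_1)$ and $L^1(d_{B_1})$ data into a reference bound on $\|u\|_{L^\infty(B_{3/4}\setminus B_{1/2})}$, one arrives at the sharp pointwise estimate
$$|u_r(r)|\;\le\; C\,r^{-n/2+\sqrt{n-1}+1}\qquad\text{for }r\in(0,1/2],$$
up to a logarithmic correction at the borderline. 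Since $u(1)=0$, integrating $u(r)=-\int_r^1 u_s(s)\,ds$ and comparing the exponent $-n/2+\sqrt{n-1}+1$ with $-1$ yields the three regimes: a uniform bound for $n\le 9$ (exponent $>-1$), logarithmic growth for $n=10$ (exponent $=-1$), and power growth with a logarithmic factor for $n\ge 11$ (exponent $<-1$), matching exactly the statements (a), (b), (c).

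The hardest part is the last step: upgrading the integral inequality \eqref{keyH} to the \emph{sharp} pointwise bound on $u_r$ with exponent $-n/2+\sqrt{n-1}+1$. A naive weighted Sobolev embedding would waste the precise constant $n-1$ in \eqref{keyH}; extracting the sharp exponent requires either a Moser-type iteration in weighted spaces or a tailored ODE comparison based on \eqref{eqw} and the now-known sign of $w$. This is precisely where the threshold dimension $n=10$ emerges, through the algebraic condition $-n/2+\sqrt{n-1}+1=-1$.
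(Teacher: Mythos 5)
Your derivation of the core inequality is exactly the paper's route: testing the stability condition \eqref{stability} with $\xi=u_r\eta$ and using $(\Delta+f'(u))u_r=(n-1)u_r/r^2$ yields precisely \eqref{radst}, and your monotonicity argument via the first zero of $u_r$ and the favorable sign of the Hardy term is in the spirit of \cite{CC}. One caveat on the setup: for a possibly singular $H^1_0$ weak solution you cannot simply reduce to $u\in C^2([0,1])$ by ``truncating $f$'' --- approximation by smooth stable solutions is only known under extra hypotheses (e.g.\ $f\ge 0$ convex); the argument instead works with the weak solution directly and truncates the \emph{test functions} near the origin, which is why the paper's $\eta$ is cut off there.

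The genuine gap is your last step. You reduce the theorem to the pointwise bound $|u_r(r)|\le C\,r^{-n/2+\sqrt{n-1}+1}$, explicitly flag it as the hardest part, and leave it unproved (proposing a Moser iteration or an ODE comparison). That pointwise gradient bound is never established --- and it is not needed. The completion is: take $\eta=r\bigl(r^{-\alpha}-(1/2)^{-\alpha}\bigr)_{+}$ in \eqref{radst}; matching the most singular terms $r^{-2\alpha}$ on the two sides (coefficients $n-1$ versus $(\alpha-1)^2$) gives, whenever $(\alpha-1)^2<n-1$, the integral bound
$$
\int_{B_{1/2}}u_r^2\,r^{-2\alpha}\,dx\;\le\;C_{\alpha,n}\int_{B_{1/2}}u_r^2\,r^{n-1}\,dx,
$$
and then one estimates $u$ itself directly by Cauchy--Schwarz on the line integral,
$$
|u(s)-u(1/2)|\le\int_s^{1/2}|u_r|\,r^{-\alpha+\frac{n-1}{2}}\,r^{\alpha-\frac{n-1}{2}}\,dr
\le C_n\Bigl(\int_{B_{1/2}}u_r^2r^{-2\alpha}\,dx\Bigr)^{1/2}\Bigl(\int_s^{1/2}r^{2\alpha+1-n}\,dr\Bigr)^{1/2},
$$
with the right-hand side of the first display controlled by the data in \eqref{linftyrad} via interior estimates on the annulus. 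For $n\le 9$ one chooses $\alpha$ with $(n-4)/2<\alpha-1<\sqrt{n-1}$ (possible exactly when $n\le 9$), making the last factor finite at $s=0$; the cases $n=10$ and $n\ge 11$ follow from the same two displays at or near the critical exponent $\alpha=1+\sqrt{n-1}$, which is where the logarithms arise. This bypasses entirely the pointwise control of $u_r$ that your write-up leaves open, and it is where the threshold $n=10$ actually emerges, through $(n-4)/2<\sqrt{n-1}\iff n\le 9$ rather than through your condition $-n/2+\sqrt{n-1}+1=-1$ (which happens to single out the same dimension).
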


In this result, the assumption that the weak solution belongs to $H^1_0(B_1)$ is needed
---see Example 3.2.1 of~\cite{Dupaigne} for this statement. 
The proof of the theorem will be given in section~3, but later in this introduction we will comment on its proof.

From the theorem, and working with the 
stable classical solutions $u=u_\la$ of \eqref{pbla}, it is easy to bound the right-hand side of \eqref{linftyrad}
uniformly in $\lambda$ ---assuming $\la>\la^*/2$ and 
\eqref{hypf} on the nonlinearity. Indeed, one first multiplies \eqref{pbla}
by the first Dirichlet eigenfunction $\varphi_1$ of the Laplacian. We obtain 
$\lambda_1\int_\Om u\varphi_1\,dx =\lambda\int_\Om f(u)\varphi_1\,dx$. Using now the superlinearity of $f$ at
infinity, one deduces a bound for $\int_\Om f(u)\varphi_1\,dx$ uniform in $\la>\la^*/2$. 
From this, multiplying \eqref{pbla}
by the solution $w$ of $-\Delta w= 1$ in $\Om$ with zero Dirichlet boundary values, and controlling $w$ by
$\varphi_1$, one finally bounds $\Vert u \Vert_{L^1(\Om)}$ uniformly as $\la\uparrow\la^*$. 
Therefore, since the classical solutions $u=u_\la$ are radially decreasing by the moving planes method, 
from the previous theorem we deduce:

\begin{corollary}[\cite{CC}]
\label{corolrad}
Assume that $f$ satisfies \eqref{hypf} and that $\Om=B_1$. Let $u^*$ be the extremal solution of~\eqref{pbla}.
If $1\leq n\leq 9$, then $u^{*}\in L^\infty (B_1)$. 
\end{corollary}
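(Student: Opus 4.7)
The plan is to apply Theorem~\ref{thm2}(a) to each member of the approximating family of minimal classical solutions $\{u_\la\}_{\la<\la^*}$ of \eqref{pbla} and then pass to the monotone limit as $\la\uparrow\la^*$. Each $u_\la$ is a positive classical solution of \eqref{pbla} in the ball, so by the Gidas--Ni--Nirenberg moving planes method it is radially symmetric (and radially decreasing); it is also stable by construction and clearly lies in $H^1_0(B_1)$. Theorem~\ref{thm2}(a), applied with nonlinearity $\la f$, then yields
\begin{equation*}
\|u_\la\|_{L^\infty(B_1)}\leq C_n\bigl(\|u_\la\|_{L^1(B_1)}+\la\|f(u_\la)\,d_{B_1}\|_{L^1(B_1)}\bigr),
\end{equation*}
with $C_n$ depending only on $n$. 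The task therefore reduces to bounding the right-hand side uniformly for $\la\in(\la^*/2,\la^*)$.

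For this, I would first test the equation against the positive first Dirichlet eigenfunction $\varphi_1$ of $-\Delta$ on $B_1$, with eigenvalue $\la_1$, obtaining
\begin{equation*}
\la_1\int_{B_1}u_\la\varphi_1\,dx=\la\int_{B_1}f(u_\la)\varphi_1\,dx.
\end{equation*}
By the superlinearity in \eqref{hypf}, for $K:=4\la_1/\la^*$ there exists $C_K$ with $f(t)\ge Kt-C_K$ for all $t\ge 0$; since $\la K\ge 2\la_1$ for $\la>\la^*/2$, substituting gives
\begin{equation*}
\la_1\int_{B_1}u_\la\varphi_1\,dx\ge 2\la_1\int_{B_1}u_\la\varphi_1\,dx-\la^* C_K\int_{B_1}\varphi_1\,dx,
\end{equation*}
so that $\int u_\la\varphi_1$, and hence $\int f(u_\la)\varphi_1$, is bounded independently of $\la$. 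Hopf's lemma furnishes $c>0$ with $\varphi_1\ge c\,d_{B_1}$, which upgrades the latter estimate to a uniform bound on $\int f(u_\la)\,d_{B_1}$.

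For $\|u_\la\|_{L^1}$, I would test against the torsion function $w$ solving $-\Delta w=1$ in $B_1$ with $w=0$ on $\prt B_1$. Integration by parts, together with $w\le C\,d_{B_1}$ (by Hopf, since $w\in C^2(\overline{B_1})$ vanishes on $\prt B_1$), gives
\begin{equation*}
\int_{B_1}u_\la\,dx=\la\int_{B_1}f(u_\la)\,w\,dx\le C\la^*\|f(u_\la)\,d_{B_1}\|_{L^1(B_1)},
\end{equation*}
which is uniformly bounded by the previous step. With both terms on the right-hand side of the Theorem~\ref{thm2}(a) estimate now controlled uniformly in $\la$, I obtain a uniform $L^\infty$ bound on $u_\la$, and the monotone convergence $u_\la\uparrow u^*$ then gives $u^*\in L^\infty(B_1)$.

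The reduction is essentially a packaging of standard elliptic tools (moving planes, Hopf, torsion function) once Theorem~\ref{thm2} is in hand. The one place where the structural assumption on $f$ genuinely enters is the superlinearity step used to bound $\int u_\la\varphi_1$, and this is the sole mild obstacle in the argument; it is precisely what \eqref{hypf} was designed to handle.
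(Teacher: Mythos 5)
Your proposal is correct and follows essentially the same route as the paper: apply Theorem~\ref{thm2}(a) to the radial (by moving planes) stable classical solutions $u_\la$, bound $\int f(u_\la)\varphi_1$ uniformly for $\la>\la^*/2$ via the eigenfunction identity and the superlinearity in \eqref{hypf}, control $\Vert u_\la\Vert_{L^1}$ through the torsion function, and pass to the limit $\la\uparrow\la^*$. The details you supply (the choice $K=4\la_1/\la^*$, Hopf's lemma to compare $\varphi_1$ with $d_{B_1}$) are exactly the standard fillings of the argument sketched in the paper.
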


We turn now to the nonradial case. We present the currently known estimates that hold for all nonlinearities.
These estimates agree with the available ones for convex nonlinearities satisfying \eqref{hypf} ---nothing
better is known in this case. An apriori $L^\infty$ estimate is known only up to dimension 4.

\begin{theorem}\label{thm3}
$(a)$ {\rm (\cite{Cabre})}
Let $f$ be any smooth function satisfying $f\geq 0$ in $\R$ and  
$\Omega\subset \mathbb{R}^n$ a smooth bounded domain.
Assume that $2\leq n\leq 4$ and let $u$ be a stable classical solution of~\eqref{pb}. 
Then, for any compact set $K\subset \Om$, we have
\begin{equation}\label{linf4pb}
 \Vert u\Vert_{L^\infty(\Omega)} \leq C (\Omega, K, \Vert u\Vert_{L^1(\Om)}, 
 \Vert \nabla u\Vert_{L^4(\Omega\setminus K)}),
\end{equation}
where $C$ is a constant depending only on the quantities within the parentheses.

$(b)$ {\rm (\cite{Nedev,Cabre,V})}
Let $f$ satisfy \eqref{hypf}, $\Omega\subset\R^{n}$ be a smooth bounded domain, and $1\leq n\leq 4$.
If $n\in \{3,4\}$ assume either that $f$ is a convex nonlinearity or that $\Omega$ is a convex domain. 
If $u^*$ is the extremal solution of \eqref{pbla}, then $u^* \in L^\infty(\Omega)$.
\end{theorem}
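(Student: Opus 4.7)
\medskip
\noindent\textbf{Proof plan for Theorem \ref{thm3}.}

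\emph{Strategy for part (a).} The natural idea is to transform the stability inequality \eqref{stability} into a geometric Poincar\'e-type inequality for $|\nabla u|$, following the Sternberg--Zumbrun approach. Concretely, I would plug $\xi = |\nabla u|\zeta$ into \eqref{stability} (after a suitable regularization around the critical set $\{\nabla u=0\}$), and combine with the Bochner identity for $-\Delta u=f(u)$,
\begin{equation*}
\tfrac{1}{2}\Delta|\nabla u|^2 = |D^2 u|^2 - f'(u)|\nabla u|^2,
\end{equation*}
integrated against $\zeta^2$. After integration by parts, the $f'(u)$ terms cancel and one is left with
\begin{equation*}
\int_{\Omega}\bigl(|D^2u|^2 - |\nabla|\nabla u||^2\bigr)\zeta^2\,dx \;\leq\; \int_{\Omega}|\nabla u|^2|\nabla\zeta|^2\,dx
\end{equation*}
for every Lipschitz $\zeta$ compactly supported in $\Omega$. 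The integrand on the left is pointwise nonnegative (it equals $|A|^2|\nabla u|^2 + |\nabla_T|\nabla u||^2$, with $A$ the second fundamental form of the level sets of $u$), so this plays the role of a stability-driven bound on the tangential variation of $|\nabla u|$.

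\emph{From the geometric inequality to $L^\infty$.} The second step is to feed this inequality with a carefully tailored radial test function $\zeta(x)=\rho(|x-x_0|)\psi(x)$, where $\psi$ is a cutoff localizing $x_0$ inside some compact subset of $\Omega$ and $\rho$ is a power of $|x-x_0|$ tuned to the dimension. When $n\leq 4$, the resulting right-hand side is integrable in the relevant weight, yielding a Morrey-type decay
\begin{equation*}
\int_{B_r(x_0)}|\nabla u|^2\,dx\;\leq\; C\, r^{n-2+2\gamma}
\end{equation*}
for some $\gamma>0$, where $C$ is controlled by $\|u\|_{L^1(\Omega)}$ and $\|\nabla u\|_{L^4(\Omega\setminus K)}$ through the boundary/cutoff contributions. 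Morrey's embedding then gives H\"older continuity, and hence the interior bound \eqref{linf4pb}. The main obstacle here is exactly the algebraic bookkeeping that makes the dimensional threshold $n=4$ appear: one needs the weighted right-hand side to be finite, and this is where the $L^4$-gradient hypothesis on $\Omega\setminus K$ is used to absorb the contribution of $\nabla\zeta$ near the support boundary.

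\emph{Strategy for part (b).} Once part (a) is available, the bound for the extremal solution $u^*$ follows by approximation with the classical stable solutions $u_\lambda$, provided one can control uniformly in $\lambda\uparrow\lambda^*$ the quantities entering \eqref{linf4pb}. For the $L^1$-norm I would use, as already discussed in the excerpt, the first-eigenfunction test $\int_\Omega u_\lambda\varphi_1 = \lambda\int_\Omega f(u_\lambda)\varphi_1/\lambda_1$ combined with superlinearity of $f$ to bound $\int f(u_\lambda)\varphi_1$ uniformly, and then boundary/distance comparison to turn this into a uniform $L^1$ bound. For the $L^4$-norm of $\nabla u_\lambda$ away from a compact set, the standard device is to test \eqref{pbla} with $u_\lambda g(d_\Omega)$ and iterate (using $f\geq 0$ and Moser-type reasoning) to obtain boundary estimates; equivalently one can use Nedev's classical argument for convex $f$: test \eqref{stability} with $\xi=g(u_\lambda)-g(0)$ for a suitable $g$ with $g'=\sqrt{f'}$ to get, via convexity of $f$, an improved $L^q$ estimate for $f(u_\lambda)$, then bootstrap by elliptic regularity. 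In the convex-domain case one replaces Nedev's step with Villegas' pointwise gradient estimate near $\partial\Omega$, which uses that each point of the boundary admits an exterior supporting hyperplane. In all cases the crux is the uniform $L^q$ control of $f(u_\lambda)$ for $q$ large enough relative to $n$, and this is where I expect the main technical difficulty of part (b) to concentrate.
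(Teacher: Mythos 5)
Your first step for part (a) --- deriving the Sternberg--Zumbrun inequality \eqref{semi1} from stability with $\xi=|\nabla u|\zeta$ --- is exactly the paper's starting point. The gap is in your second step. Feeding \eqref{semi1} a radial weight $\zeta=\rho(|x-x_0|)\psi$ cannot produce a Morrey decay $\int_{B_r(x_0)}|\nabla u|^2\,dx\le Cr^{n-2+2\gamma}$, because the left-hand side of \eqref{semi1} only contains the tangential derivative $|\nabla_T|\nabla u||^2$ and the curvature term $|A|^2|\nabla u|^2$ of the level sets: both vanish identically wherever $u$ is affine, so the inequality gives no lower bound of the form $\int|\nabla u|^2\zeta^2\,w(|x-x_0|)\,dx$ for an arbitrary center $x_0$. (This weighted mechanism does work in the radial case, where $|A|^2=(n-1)/r^2$ exactly and one recovers Theorem~\ref{thm2} up to $n\le 9$; that is precisely what fails to generalize.) The paper's proof instead takes $\eta=\varphi(u)$, i.e.\ test functions \emph{constant on level sets}, rewrites both sides of \eqref{semi1} via the coarea formula as one-dimensional inequalities $\int_0^T h_1\varphi^2\,ds\le\int_0^T h_2\varphi'^2\,ds$ in the level parameter, and then applies the Michael--Simon/Allard Sobolev inequality \eqref{MSsob} with $v=|\nabla u|^{1/2}$, $p=2$ on each regular level set $\Gamma_s$ to get $h_2^{1/3}\le Ch_1$ when $n=4$; the restriction $n\le 4$ comes from requiring $(n-1)/(n-3)\ge 3$, and the final step is an exponential test function $\varphi$ nearly saturating the ODE $h_1\varphi^2=2h_2\varphi'^2$. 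Note also that the $L^4$ norm of $\nabla u$ in \eqref{linf4pb} arises because $|\nabla\eta|^2=\varphi'(u)^2|\nabla u|^2$ multiplies the $|\nabla u|^2$ already present in \eqref{semi1}, giving $\int_{\{u<t\}}|\nabla u|^4\varphi'(u)^2\,dx$; with a spatial cutoff as in your proposal one would only ever see an $L^2$ gradient norm, so the stated dependence on $\Vert\nabla u\Vert_{L^4(\Omega\setminus K)}$ could not appear.

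For part (b) your outline of the uniform $L^1$ bound is correct, but the attributions and mechanisms are switched. In convex domains the boundary control comes from the moving planes method applied to $u_\lambda$ (no convexity of $f$, no stability needed), which gives an $L^\infty$ bound near $\partial\Omega$ and then, by elliptic regularity, the $L^4$ gradient bound needed in \eqref{linf4pb}; this is in \cite{Cabre}, not Villegas. Villegas' contribution \cite{V} is the case of convex $f$ in a \emph{nonconvex} domain when $n=4$: he combines Nedev's global bound \eqref{boundnedev} (which yields $\int_{\{u>1\}}f(u)^2/u\,dx\le M$) with the key estimate \eqref{est4} of Theorem~\ref{thm4} applied with $t=\Vert\nabla u\Vert_{L^4(\Omega)}$ and the Sobolev embedding $W^{2,2}\hookrightarrow W^{1,4}$ in dimension $4$, closing the resulting inequality $\Vert u\Vert_{L^\infty}\le C(1+M\Vert u\Vert_{L^\infty})^{1/2}$. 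A plain ``bootstrap by elliptic regularity'' from Nedev's bound only reaches $n\le 3$.
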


Note that part (b) of the theorem does not require assumption \eqref{tau} on $f$.

In dimensions 2 and 3, and for convex nonlinearities, part (b) was proved
by G.~Nedev~\cite{Nedev} in 2000. We will briefly describe its proof in section~2.

Ten years later the author established the first result in dimension 4, in \cite{Cabre}.
Note that part (a) holds for {\it all nonnegative 
nonlinearities}, and that the $L^{4}$ norm of $\nabla u$ is computed in an arbitrarily small neighborhood of
the boundary $\partial\Om$. As a consequence, we will see that the estimate leads easily
to the boundedness of the extremal solution in convex domains. This is part of the 
statement in (b). 

In part (b), for $3\leq n \leq 4$, \cite{Cabre} requires $\Omega$ to be convex, while $f$ does not need to be
convex. Some years later, S. Villegas~\cite{V} succeeded to use both \cite{Cabre} and \cite{Nedev} when $n=4$
to remove the requirement that $\Omega$ is convex by further assuming that $f$ is convex. 

\begin{openpb}\label{OP1}
For convex nonlinearities satisfying \eqref{hypf}, the boundedness of the extremal solution $u^*$,
or the existence of an apriori $L^\infty$ bound for stable classical solutions, is still unknown
in dimensions $5\leq n\leq 9$. This is the case even for convex domains which are symmetric with respect
to each coordinate hyperplane $\{x_i=0\}$. It is only known, by Theorem~\ref{thm2}, in balls.
\end{openpb}

Let us explain where the convexity of $\Omega$ comes into play in Theorem~\ref{thm3}
and raise an open problem in this direction. As we showed after Theorem~\ref{thm2},  
for a solution $u$ of \eqref{pb} and for nonlinearities $f$ which are superlinear at infinity, 
one can give a bound for $\Vert u \Vert_{L^1(\Om)}$ depending only on $f$ and $\Omega$. 
Therefore, estimate \eqref{linf4pb}
in Theorem~\ref{thm3} reduces the $L^\infty$ bound for stable solutions $u$ to a bound for $u$
in a sufficiently small neighborhood $\ovl\Om\setminus K$ of the boundary. In this case we need an $L^4$ estimate
for $\nabla u$ in $\Om\setminus K$. Thus, the question is now whether there 
are boundary estimates for stable positive solutions
of semilinear elliptic problems. This turns out to be true in convex domains (and here the stability of the 
solution is not needed), since the moving planes method
gives a cone of monotonicity for $u$ at all points close enough to $\partial\Om$. Such cone of monotonicity
gives automatically an $L^\infty$ control for $u$ near $\partial\Om$ in terms of 
$\Vert u \Vert_{L^1(\Om\setminus K)}$ ---a quantity that, as $\Vert u \Vert_{L^1(\Om)}$, is under control. 
See \cite{Cabre,Dupaigne} for more details.

In dimension 2, W. X. Chen and C. Li \cite{CL} extended this boundary estimate to general domains 
---they removed the convexity assumption
on the domain after doing a Kelvin transform at boundary points. One only needs to assume that $f\geq 0$;
see \cite{CL,Cabre} for more details. However, in dimensions $n\geq 3$ boundary estimates 
for stable solutions remain open:

\begin{openpb}\label{OP2}
Are there apriori {\it boundary estimates} (for instance $L^\infty$ bounds in a neighborhood of the boundary) 
for stable classical solutions in nonconvex domains $\Om$
of $\R^n$, $n\geq 3$, for all nonnegative nonlinearities $f$ or at least for a large class of them? 
It would be desirable that the control is in terms of $\Vert u \Vert_{L^1(\Om)}$.

In this respect, Nedev's estimate \eqref{boundnedev}, presented below, is a global integral bound ---and thus also
a boundary integral bound. In section~4 it will be used to control, when $n\leq 4$, the ``boundary quantity'' 
$\Vert\nabla u\Vert_{L^4(\Om\setminus K)}$ in \eqref{linf4pb}.
\end{openpb}

Let us finally comment on proofs. Recall \eqref{choice1}:
for Theorem~\ref{thm1} the key point is to use a power of $f$ as test function $\xi$ in the stability condition
\eqref{stability}. 
Instead, for Theorems \ref{thm2} and \ref{thm3} we will use other type of test functions. 
The proofs start by writing 
the stability condition \eqref{stability} for the test 
function $\xi=c\eta$, where $\eta|_{\partial\Omega}\equiv0$. 
Integrating by parts, one easily deduces that
\begin{equation}\label{stabc}
\int _{\Omega} \left( \Delta c+f'(u)c\right) c\eta ^{2}\,dx \le
\int _{\Omega} c^{2}\left|\nabla \eta\right|^{2}\,dx.
\end{equation}
Next, a key point is to choose a function $c$ satisfying an appropriate equation for
the linearized operator $\Delta + f'(u)$. In the radial case the choice of $c$ and the final choice of $\xi$ are
\begin{equation}\label{choicerad}
c=u_{r} \quad\text{ and }\quad\xi=u_r r(r^{-\alpha}-(1/2)^{-\alpha})_{+}
\end{equation}
where $r=|x|$, $\alpha>0$, and $\xi$ is later truncated arbitrarily near the origin to make it Lipschitz.
Note that, differentiating $u_{rr}+(n-1)r^{-1}u_r+f(u)=0$ with respect to~$r$, we get
\begin{equation}\label{linrad}
(\Delta + f'(u)) \, u_r=(n-1)\frac{u_r}{r^2}.
\end{equation}

This way of proceeding was motivated by the study of stable minimal cones in the theory of 
minimal surfaces, where 
one takes $c=\left|A\right|$ (the norm of the second fundamental 
form of the cone), and then later $\xi= c r^{-\alpha}$ (properly truncated), where $r$~is 
the distance to the vertex of the cone.

For the estimate up to dimension 4 in the nonradial case, Theorem~\ref{thm3}, we will take
\begin{equation}\label{choice4}
c=\left|\nabla u\right| \quad\text{ and }\quad\xi=\left|\nabla u\right|\varphi(u)
\end{equation}
where, in dimension $n=4$, $\varphi$ will be chosen depending on the solution $u$ itself 
---thus not depending only on $f$
as in Theorem~\ref{thm1}.
A simple computation (see section~4) shows that
\begin{equation}\label{eq:grad}
\left( \Delta+f'(u)\right)\left|\nabla u\right|=
\frac{1}{\left|\nabla u\right|} \left(\left|\nabla_T\left|\nabla u\right|\right|^2+
\left|A\right|^2\left|\nabla u\right|^2\right)\quad \text{in}\ 
\Omega\cap\left\{\left|\nabla u\right|>0\right\},
\end{equation}
where $\left|A\right|^2=\left|A\left(x\right)\right|^2$ is the squared norm of 
the second fundamental form of the level set of $u$ passing through a 
given $x\in\Omega\cap\left\{\left|\nabla u\right|>0\right\}$, i.e., the 
sum of the squares of the principal curvatures of the level set. On the 
other hand, $\nabla_T$ denotes the tangential gradient to the level set. 
Thus, \eqref{eq:grad} involves geometrical information of the level sets 
of $u$. As we will see in section~4, the choice of the function $\varphi=\varphi(t)$ 
in \eqref{choice4} will depend on certain integrals on the level set $\{u=t\}$. 

The next crucial point in dimensions $n\leq 4$ is the use of the {\it Michael-Simon 
and Allard Sobolev inequality} (Proposition~\ref{Sobolev} below) 
in every regular level set of $u$. This is a remarkable 
Sobolev inequality on general hypersurfaces of $\mathbb{R}^n$ 
that involves their mean curvature but has a best constant that is independent of the 
hypersurface ---it depends only on the dimension~$n$.

\subsection{Other nonlinear elliptic problems}
The previous questions have been studied also for semilinear elliptic problems involving
other operators. For the $p$-Laplace operator and $f(u)=e^u$, minimal and extremal solutions of \eqref{pbla} 
were studied by Garc\'{\i}a-Azorero,
Peral, and Puel \cite{GP,GPP}. They established the
boundedness of the extremal solution when $n<p+4p/(p-1)$, and showed
that this condition is optimal. Under hypotheses \eqref{hypf} and \eqref{tau}, 
Theorem~\ref{thm1} has been extended
to the $p$-Laplace case by Sanch\'on~\cite{Sanchon}. The radial case in Theorem~\ref{thm2}
has been extended by Capella, Sanch\'on, and the author~\cite{CCaS}.

Other works treat the case of the bilaplacian, the mean curvature operator for graphs, 
the fractional Laplacian, 
boundary reaction problems (this is related with fractional diffusion), and elliptic systems.
Optimal $L^\infty$ bounds are still unknown for many of these problems, in particular for the fractional
Laplacian. 

Finally, the semilinear equation $-\Delta u=\la f(u)$ has also been studied for nonlinearities $f:[0,1)\to\R$
which blow-up at $u=1$ ---this is in connection with micro-electro-mechanical devices, MEMS; 
see the monograph \cite{EGG}. 

For more information and references in all these problems, see 
Dupaigne's monograph~\cite{Dupaigne} ---mainly chapter~8.

\section{The exponential nonlinearity. Nedev's result}

The proof of Theorem~\ref{thm1} uses the stability condition with the test function $\xi$
being essentially a power of $f(u)$. Let us give all details 
of the proof in the case of the exponential nonlinearity.

\begin{proof}[Proof of Theorem~\ref{thm1} (a) when $f(u)=e^u$]
Let $u$ be a stable solution of \eqref{pbla} with  $f(u)=e^u$. Take $\la\in (0,\la^*)$ so that $u$ is a 
classical solution, and $\xi = e^{\alpha u} -1$ in the stability
condition \eqref{stability}. We deduce
\begin{eqnarray*}
\int_{\Om} \la e^u (e^{\alpha u} -1)^2\,dx & \le & \int_{\Om} \alpha^2 e^{2\alpha u} |\nabla u|^2\,dx
=\frac{\alpha}{2}  \int_{\Om} \nabla (e^{2\alpha u}-1) \cdot \nabla u\,dx\\
& = & \frac{\alpha}{2}  \int_{\Om} (e^{2\alpha u}-1) (-\Delta u)\,dx
= \frac{\alpha}{2}  \int_{\Om} \la e^u (e^{2\alpha u}-1)\,dx,
\end{eqnarray*}
where we have used that $u$ solves problem \eqref{pbla}. {From} the first and last integrals
in these inequalities, we deduce that
$$
\left( 1-\frac{\alpha}{2}\right) \int_{\Om} e^{(1+2\alpha)u}\,dx
\le 2 \int_{\Om} e^{(1+\alpha)u}\,dx.
$$

Taking any $\alpha < 2$ and using H\"older inequality, this allows to bound $ \int_{\Om} e^{(1+2\alpha)u}\,dx$
by a constant depending only on $\alpha$ and $|\Om|$. Denoting $p:=1+2\alpha$, we conclude
that $\la e^u \in L^p(\Om)$ for every $p<5$ ---with an apriori bound independent of $\la$.
Thus $-\Delta u$ is in $L^p (\Om)$ and, hence,
$u$ is controlled in $W^{2,p}(\Om)$ uniformly in $\la$.  
If $n\le 9$ the exponent~$p$ can be chosen such that $n/2 <p<5$
and, therefore, we deduce a uniform $L^\infty(\Om)$ bound for $u$ by the Sobolev embedding.
\end{proof}

Nedev extended the previous argument to give an $L^\infty$ bound in dimensions 2 and 3
for all convex nonlinearities satisfying \eqref{hypf}. 
His proof uses the stability condition with
$\xi= f(u)-f(0)$, and also tests the equation in \eqref{pbla}
multiplying it by $g(u) := \int_0^u f'(t)^2\,dt$. After some clever bounds on 
several nonlinearities which come up in the argument (see \cite{Nedev,Dupaigne}), 
he deduces the apriori bound
\begin{equation}\label{boundnedev}
\int_\Om f'(u)f(u) \,dx\le C. 
\end{equation}
Next, since $f$ is convex, we have $-\Delta (f(u)-f(0))\le f'(u)(-\Delta u)=\la f'(u)f(u)$.
Thus $0\le f(u)-f(0)\le v$, where $v$ is the solution of $-\Delta v= \la f'(u)f(u)$ with
zero Dirichlet boundary values. From \eqref{boundnedev} we deduce that $v$, and hence 
also $f(u)-f(0)$, belong to $L^p(\Om)$ for all $p<n/(n-2)$. We conclude that
$u$ is bounded in $W^{2,p}(\Om)$ for all $p<n/(n-2)$. This gives an $L^\infty$ estimate
in dimensions 2 and 3.

An alternative proof of Nedev's bound \eqref{boundnedev} was recently found 
by Sanch\'on, Spruck, and the author \cite{CSS}.

\section{The radial case}

We give here the main ideas in the proof of Theorem~\ref{thm2} concerning the radial case.
See \cite{CC,Dupaigne} for the proof with all details.
As stated in the theorem, the proof requires $u\in H^1_0(B_1)$ ---a condition which
is needed by Example 3.2.1 of~\cite{Dupaigne}. Instead, the equation may be assumed to hold only in 
$B_1\setminus\{0\}$, since the test functions in the following argument can be cut-off
near the origin.

We use the stability in the form
\eqref{stabc} with $c=u_{r}$, together with the linearized equation~\eqref{linrad}, to obtain
\begin{equation}\label{radst}
(n-1)\int_{B_1}u_r^2\, \frac{\eta^2}{r^2}\,dx
\leq\int_{B_1}u_r^2 |\nabla \eta|^2\,dx. 
\end{equation}
Choose now $\eta=r(r^{-\alpha}-(1/2)^{-\alpha})_{+}$ ---after
being cut-off near the origin to make it Lipschitz. In the following, however,
we do not give all the details of the cut-off argument, which are standard. 
We deduce
\begin{equation}\label{poincrad}
(n-1)\int_{B_1}u_r^2(r^{-\alpha}-(1/2)^{-\alpha})_{+}^2\, dx
\leq\int_{B_1}u_r^2 |\nabla \left( r(r^{-\alpha}-(1/2)^{-\alpha})_{+}\right)|^2 \,dx.
\end{equation}
The most singular term near the origin in the right-hand side is 
$(\alpha-1)^2 \int_\Om u_r^2r^{-2\alpha}\,dx$. The same integral appears as most singular term in the
left-hand side, but with constant $n-1$ in front. After a simple interpolation argument, we deduce that
\begin{equation}\label{ineqrad}
\int_{B_{1/2}}u_r^2r^{-2\alpha}\,dx\leq C_{\alpha,n} \int_{B_{1/2}}u_r^2r^{n-1}\,dx
\end{equation}
if $(\alpha -1)^2 < n-1$. That is, we need to take $\alpha$ such that
\begin{equation}
1\le \alpha<1+\sqrt{n-1}.
\label{eq:alpha}
\end{equation}

Using that $u$ is radial, it is not difficult to see that the integral 
in the right-hand side of \eqref{ineqrad} can be bounded by the right-hand side
of \eqref{linftyrad}. With this at hand, the bound in \eqref{ineqrad},
valid for the above range of exponents $\alpha$, leads to an $L^\infty$ estimate for $u$ whenever
$n\leq 9$. To see this we just notice that, for $0<s\le 1/2$, we have
\begin{eqnarray}\label{eq:23}
 |u(s)-u(1/2)| & = & \left|\int _{s}^{1/2}-u_{r}\,dr\right|
\le \int _{s}^{1/2} |u_{r}|r^{-\alpha +\frac{n-1}{2}}
r^{\alpha -\frac{n-1}{2}}\,dr\nonumber\\
 & \leq  & C_n \left( \int _{B_{1/2}}u_{r}^{2}
r^{-2\alpha }\,dx\right)^{1/2}
\left(\int _{s}^{1/2}r^{2\alpha +1-n}\,dr\right)^{1/2}
\end{eqnarray}
by Cauchy-Schwarz. 

The last integral in \eqref{eq:23} is finite with $s=0$ if we take $2\alpha +1-n>-1$, i.e.,
\begin{equation*}
(n-4)/2<\alpha-1 .
\end{equation*}
Since $n\leq 9$, then $(n-4)/2<\sqrt{n-1}$ and we can choose
$\alpha $ satisfying this last condition as well as \eqref{eq:alpha}.

The pointwise estimates in dimensions $n\geq 10$ stated in Theorem~\ref{thm2} also follow from  
\eqref{ineqrad}, used together with \eqref{eq:23}. These 
pointwise bounds give $L^q$ integrability results for $H^1_0$ stable radial solutions with a range of
exponents $q$ depending only on the dimension $n\geq 10$. The range turns out to be optimal,
as it is checked with the explicit radial solutions $r^{-2/(p-1)}-1$ (see \cite{CC}).

\section{Dimensions $n\leq 4$}

In this section we give most of the details of the proof of Theorem~\ref{thm3};
see \cite{Cabre,Dupaigne} for all details. The key estimate is the following:

\begin{theorem}[\cite{Cabre}]\label{thm4}
Let $f$ be any smooth function and
$\Omega\subset \mathbb{R}^n$ any smooth bounded domain.
Assume that $2\leq n\leq 4$.  Let $u$ be a positive stable classical solution of \eqref{pb}.

Then, for every $t>0$,
\begin{equation}\label{est4}
 \Vert u\Vert_{L^\infty(\Omega)} \leq t + \frac{C}{t}|\Omega|^{(4-n)/(2n)}
\left( \int_{\{0<u <t\}} |\nabla u|^4 \ dx\right)^{1/2},
\end{equation}
where $C$ is a universal constant {\rm (}in particular, independent of 
$f$, $\Omega$,~and~$u${\rm)}.
\end{theorem}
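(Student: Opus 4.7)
The plan is to establish \eqref{est4} via the stability condition \eqref{stability} applied with the test function $\xi=\left|\nabla u\right|\varphi(u)$ from \eqref{choice4}, combined with the pointwise geometric identity \eqref{eq:grad} and the Michael--Simon--Allard Sobolev inequality on the level sets of $u$.

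\textbf{Step 1.} Taking $c=\left|\nabla u\right|$ and $\eta=\varphi(u)$ in \eqref{stabc} and using \eqref{eq:grad} multiplied by $\left|\nabla u\right|$ yields
\begin{equation*}
\int_\Omega \bigl(\left|\nabla_T\left|\nabla u\right|\right|^2+\left|A\right|^2\left|\nabla u\right|^2\bigr)\varphi(u)^2\,dx \;\leq\; \int_\Omega \left|\nabla u\right|^4 \varphi'(u)^2\,dx,
\end{equation*}
for every admissible $\varphi$, the critical set $\{\left|\nabla u\right|=0\}$ being treated by Sard's theorem and truncation. Choosing $\varphi'\geq 0$ supported in $[0,t]$ with $\varphi\equiv t$ on $[t,+\infty)$ localizes the right-hand side to a constant multiple of $\int_{\{0<u<t\}}\left|\nabla u\right|^4\,dx$ and produces geometric control of $t^2\int_{\{u>t\}}\left|A\right|^2\left|\nabla u\right|^2\,dx$ together with a tangential Dirichlet energy of $\left|\nabla u\right|$ on the same region.

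\textbf{Step 2.} The coarea formula rewrites the left-hand side of Step 1 as an integral over level sets $\{u=s\}$ for $s\in(0,\left\Vert u\right\Vert_{L^\infty})$, and on each such $(n-1)$-dimensional hypersurface I invoke the MSA Sobolev inequality (Proposition~\ref{Sobolev}) together with the pointwise bound $\left|H\right|\leq \sqrt{n-1}\,\left|A\right|$ on the mean curvature. With a suitable test function on the level set, this converts the curvature control from Step~1 into a pointwise (in $s$) bound relating an $L^{(n-1)/(n-2)}$-type integral of $\left|\nabla u\right|$ on $\{u=s\}$ to the quantity controlled in Step~1. In the critical dimension $n=4$, as indicated after \eqref{choice4}, $\varphi$ must be allowed to depend on $s$ through the level-set integral of $\left|\nabla u\right|$ so that the exponents balance in MSA.

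\textbf{Step 3.} Combining Step 2 with the classical isoperimetric inequality in $\mathbb{R}^n$ applied to $\{u>s\}$ and with Hölder's inequality produces a differential/integral inequality for the distribution function $V(s)=\left|\{u>s\}\right|$ on $\bigl(t,\left\Vert u\right\Vert_{L^\infty}\bigr)$, or equivalently for $P(s)=\int_{\{u=s\}}\left|\nabla u\right|\,dH^{n-1}$. Integrating this inequality from $t$ to $\left\Vert u\right\Vert_{L^\infty}$ and applying a final Hölder on $\Omega$ produces the factor $\left|\Omega\right|^{(4-n)/(2n)}$; the restriction $n\leq 4$ is precisely what keeps this exponent nonnegative and the MSA exponent $(n-1)/(n-2)$ manageable. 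I expect the main obstacle to be the critical case $n=4$: the solution-dependent choice of $\varphi$ requires careful bookkeeping to guarantee that the final constant $C$ is \emph{universal} (independent of $f$, $\Omega$, and $u$), which is exactly what Theorem~\ref{thm4} demands and what makes the subsequent deduction of \eqref{linf4pb} from boundary bounds possible.
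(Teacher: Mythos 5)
Your Steps 1--3 correctly reproduce the paper's framework and, as written, essentially carry out the paper's argument for $n=2,3$: the Sternberg--Zumbrun inequality \eqref{semi1} with $\eta=\varphi(u)$, the coarea formula giving the one-dimensional inequality $\int_0^T h_1\varphi^2\,ds\le\int_0^T h_2\varphi'^2\,ds$ with $h_1,h_2$ as in \eqref{eq:27}, the simple truncated test function, the $p=1$ Michael--Simon--Allard inequality \eqref{eq:225}, the isoperimetric inequality, and the total-variation bound on $V(s)^{(4-n)/n}$ which produces the factor $|\Omega|^{(4-n)/(2n)}$.

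The genuine gap is the case $n=4$, which is where all the difficulty of the theorem lies. The route you describe in Step 3 --- isoperimetric inequality plus a differential inequality for the distribution function $V(s)$ --- provably breaks down at $n=4$: after Cauchy--Schwarz one is left with $\int_t^T V(s)^{2(2-n)/n}(-V'(s))\,ds=\int_0^{V(t)}r^{-2(n-2)/n}\,dr$, which diverges as soon as $2(n-2)/n\ge1$, i.e.\ $n\ge4$. (Note also that the exponent $(4-n)/(2n)$ vanishes at $n=4$, so its nonnegativity is not what forces $n\le4$, and the MSA exponent you invoke, $(n-1)/(n-2)$, is the $p=1$ exponent used only in the subcritical dimensions.) The actual mechanism at $n=4$ consists of two ideas absent from your proposal. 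First, one applies MSA with $p=2$ and $v=|\nabla u|^{1/2}$ on $\Gamma_s$, using $|H|\le|A|$, to get
\begin{equation*}
\Bigl(\int_{\Gamma_s}|\nabla u|^{\frac{n-1}{n-3}}\,dV_s\Bigr)^{\frac{n-3}{n-1}}\le C_n\,h_1(s);
\end{equation*}
the restriction $n\le4$ comes from requiring $(n-1)/(n-3)\ge3$, and at $n=4$ this is the pointwise-in-$s$ inequality $h_2^{1/3}\le C\,h_1$. Second, one must choose $\varphi$ to nearly saturate the one-dimensional stability inequality by solving the ODE $h_1\varphi^2=2h_2\varphi'^2$, i.e.\ take $\varphi_k(s)=\exp\bigl(\tfrac1{\sqrt2}\int_t^s\sqrt{\min(k,h_1/h_2)}\,d\tau\bigr)$ for $s>t$ (with $\varphi_k(s)=s/t$ on $[0,t]$), and then combine $\int_t^Th_1\varphi_k^2\,ds\le2B_t$ with $h_2^{1/3}\le Ch_1$ and a telescoping computation of $\int_t^T\sqrt{g_k}\,\varphi_k^{-2}\,ds$. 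Your remark that $\varphi$ "must be allowed to depend on the level-set integrals" points in the right direction, but without the pointwise inequality $h_2^{1/3}\le Ch_1$ and the explicit exponential ansatz, the "careful bookkeeping" you defer to cannot close the argument, since the only concrete machinery you set up (the $V(s)$ differential inequality) fails exactly at $n=4$.
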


To establish this result it will be essential to apply on every regular level set of $u$ the 
following remarkable Sobolev inequality.

\begin{proposition}[Allard; Michael and Simon; see \cite{Cabre,Dupaigne}]
\label{Sobolev}
Let  $M\subset  \R^{m+1}$ be an immersed smooth $m$-dimensional 
compact hypersurface without boundary.

Then, for every $p\in  [1, m)$,
there exists a constant $C = C(m,p)$ depending only on the dimension $m$ 
and the exponent $p$ such that, for every $C^\infty$ function $v : M  \to \R$, 
\begin{equation}\label{MSsob}
\left( \int_M |v|^{p^*} \,dV \right)^{1/p^*} \leq C(m,p)
\left( \int_ M (|\nabla v|^p +  |Hv|^p) \,dV \right)^{1/p},
\end{equation}
where $H$ is the mean curvature of $M$ and $p^* = mp/(m - p)$.
\end{proposition}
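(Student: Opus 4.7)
The plan is to follow the classical Michael--Simon strategy: first reduce the general case $p\in[1,m)$ to the endpoint $p=1$, and then prove the endpoint via the first-variation identity for mass together with a covering argument carried out in the ambient space $\R^{m+1}$.

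First I would handle the reduction. Assuming \eqref{MSsob} is already known for $p=1$, apply it to $w=|v|^{\beta}$ with $\beta=(m-1)p/(m-p)>1$, chosen so that $\beta\cdot m/(m-1)=p^{*}$. From $|\nabla w|=\beta|v|^{\beta-1}|\nabla v|$ and $|Hw|=|H||v|^{\beta}$, the right-hand side of the $p=1$ inequality applied to $w$ becomes $C\int_M|v|^{\beta-1}(\beta|\nabla v|+|Hv|)\,dV$. H\"older with exponents $p/(p-1)$ and $p$, together with the identity $(\beta-1)p/(p-1)=p^{*}$ (which one verifies directly from $\beta-1=m(p-1)/(m-p)$), factors this as $C\|v\|_{L^{p^{*}}}^{\beta-1}\bigl(\int_M(|\nabla v|^{p}+|Hv|^{p})\bigr)^{1/p}$; dividing by $\|v\|_{L^{p^{*}}}^{\beta-1}$ then yields \eqref{MSsob}.

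Second, for the endpoint $p=1$ I would start from the first-variation identity
$$\int_M \mathrm{div}_M X\,dV=-\int_M \vec{H}\cdot X\,dV,$$
valid for every smooth compactly supported vector field $X$ on $\R^{m+1}$. Taking $X(y)=(y-x_0)\phi(|y-x_0|)$ for $\phi$ a radial cut-off, and computing $\mathrm{div}_M X$ using the fact that the ambient radial field has tangential divergence equal to $m-|(y-x_0)^{\perp}|^{2}/|y-x_0|^{2}$, one obtains after integrating in the radial variable the classical monotonicity
$$\frac{d}{dr}\left(\frac{\mathcal{H}^{m}(M\cap B_r(x_0))}{r^{m}}\right)\ge -\frac{1}{r^{m}}\int_{M\cap B_r(x_0)}|\vec{H}|\,dV.$$
Letting $r\downarrow 0$ at a smooth point, where the density equals $1$, produces a pointwise lower bound for $\mathcal{H}^{m}(M\cap B_r(x_0))$ of the form $c_m r^{m}-C_m r\int_{M\cap B_r(x_0)}|\vec{H}|\,dV$ with constants depending only on $m$.

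Third, I would convert this density bound into \eqref{MSsob} for $p=1$ by applying the monotonicity to the varifold associated with the subgraph $\{(y,t):y\in M,\,0\le t\le |v(y)|\}\subset \R^{m+2}$, whose generalized mean curvature packages $|\nabla v|$ and $|Hv|$ into a single term. A comparison between the mass at a near-maximum point of $|v|$ and the mass of the corresponding cone then delivers the endpoint inequality with a constant depending only on $m$; an equivalent route is a Vitali covering argument in $\R^{m+1}$ applied to balls at the scale where the density bound first saturates. The principal obstacle is precisely this last step: extracting a Sobolev constant that depends only on $m$, with no dependence on the geometry of $M$. This rules out intrinsic approaches such as Moser iteration or heat-kernel bounds on $M$, which would inevitably involve the Ricci curvature or injectivity radius of $M$. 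The success of the Michael--Simon argument lies in working entirely with extrinsic balls in $\R^{m+1}$, so that the only geometric quantity entering is the mean curvature itself, which is exactly what the right-hand side of \eqref{MSsob} controls.
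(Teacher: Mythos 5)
The paper does not prove this proposition: it is quoted as a classical result of Michael--Simon and Allard, with references to \cite{Cabre,Dupaigne} (which in turn cite the original papers). So the only question is whether your sketch of the classical argument holds together. Your first step, the reduction from general $p\in(1,m)$ to the endpoint $p=1$ via $w=|v|^{\beta}$ with $\beta=(m-1)p/(m-p)$, is correct and complete (up to the routine smoothing of $|v|^{\beta}$ when $\beta$ is not an integer, and the usual absorption argument requiring $\|v\|_{L^{p^*}}<\infty$): the exponent bookkeeping $\beta\,m/(m-1)=p^{*}$ and $(\beta-1)p/(p-1)=p^{*}$ checks out, and the constant depends only on $m$ and $p$.

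The endpoint case, however, contains a genuine gap. The pointwise lower bound you extract from monotonicity, $\mathcal{H}^{m}(M\cap B_r(x_0))\ge c_m r^{m}-C_m r\int_{M\cap B_r(x_0)}|\vec H|\,dV$, is false: take $M$ to be a sphere of radius $\varepsilon$ through $x_0$ and $r=1$; then the left-hand side is $O(\varepsilon^{m})$ while the right-hand side tends to $c_m>0$ as $\varepsilon\to0$, since $\int_M|\vec H|=O(\varepsilon^{m-1})$. The reason your derivation fails is that integrating $\frac{d}{d\rho}\bigl(\rho^{-m}\mu(B_\rho)\bigr)\ge-\rho^{-m}\int_{B_\rho}|\vec H|$ down to $\rho=0$ produces the divergent factor $\int_0^r\rho^{-m}\,d\rho$; what one actually gets (after Fubini) is $\omega_m r^{m}\le\mu(B_r)+C_m r^{m}\int_{B_r}|\vec H(y)|\,|y-x_0|^{1-m}\,d\mu$, i.e.\ a Riesz potential of $|\vec H|$, not $r^{1-m}\int_{B_r}|\vec H|$. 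The Michael--Simon proof circumvents this by working with the weighted measure $h\,d\mu$ (with $h=|v|$), selecting for each point an \emph{adaptive} radius --- the first scale at which the weighted density ratio drops below a fixed constant, which forces $\int_{B_\rho}(|\nabla h|+h|\vec H|)\,d\mu$ to be large relative to $\rho^{-1}\int_{B_\rho}h\,d\mu$ at that scale --- and only then running the Vitali covering. That radius-selection lemma is precisely the step you defer to, and without it (or a correct potential-theoretic substitute) the argument does not close. Two smaller points: $\mathrm{div}_M(y-x_0)=m$ exactly (the correction $m|(y-x_0)^{\perp}|^{2}/|y-x_0|^{m+2}$ appears in $\mathrm{div}_M\bigl((y-x_0)/|y-x_0|^{m}\bigr)$, not in the divergence of the radial field itself); and the subgraph-in-$\R^{m+2}$ device is not the classical route and would itself need the same endpoint lemma for the associated varifold.
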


Note that when $M$ is a minimal surface and $p=1$, \eqref{MSsob} is equivalent to
a universal isoperimetric inequality on subsets of any minimal surface. 

Let us start showing how to use Theorem~\ref{thm4} to deduce part (a) of Theorem~\ref{thm3}, as well as part (b) 
of Theorem~\ref{thm3} in convex domains. After this, we will explain the proof of part (b) of Theorem~\ref{thm3} 
for convex nonlinearities. Finally, we will establish Theorem~\ref{thm4}.

\subsection{Proof of Theorem~\ref{thm3}: part (a), and part (b) in convex domains}
We use a well known estimate for a positive solution of \eqref{pb}
(note that $u$ is positive since we assume $f\geq 0$).
We have
$$
u \ge c_\Om\left( \int_\Om f(u)d_\Om \,dx\right) d_\Om \qquad\text{in } \Om,
$$
where $d_\Om={\rm dist}(\cdot,\partial\Om)$ and $c_\Om$ is a constant depending only on $\Om$;
see Proposition~A.4.2 in \cite{Dupaigne}. We deduce that, in \eqref{est4},
$\{0<u<t\}\subset\{0<d_\Om< s\}$, where $s=t/(c_\Om \Vert f(u)d_\Om\Vert_{L^1(\Om)})$. 
Now, as explained right after Theorem~\ref{thm2},
$\Vert f(u)d_\Om\Vert_{L^1(\Om)}$ can be bounded by below (and by above) by $\Vert u\Vert_{L^1(\Om)}$.

Therefore, given the compact set $K$ in part (a) of Theorem~\ref{thm3}, we simply need to
take $t$ small enough such that $\{0<d_\Om <s\}\subset \Om\setminus K$. Part (a) is now proved.

{From} part (a) we easily deduce part (b) when $\Omega$ is convex. First, as we mentioned in the introduction,
there are apriori bounds for positive solutions (not necessarily stable) in convex domains, 
thanks to the moving planes method; see Proposition~3.2 in~\cite{Cabre} or Theorem~4.5.3 in~\cite{Dupaigne}. 
For $n=2$ the convexity of $\Omega$ is not needed. We apply these boundary bounds
to the minimal solutions $u=u_\la$ of \eqref{pbla}
with $\la<\la^*$. We deduce a bound for 
$\Vert u\Vert_{L^\infty(\{d_\Om<\varepsilon\})}$ by a constant depending only on $\Omega$ and
$\Vert u\Vert_{L^1(\Om)}$, where $\varepsilon>0$ depends only on $\Omega$. As a consequence,
$\Vert f(u)\Vert_{L^1(\{d_\Om<\varepsilon\})}$ is also under control by a constant that depends
on $f$ in addition. All bounds are, in particular, uniform in $\la$.

Next, we use standard elliptic regularity theory to control
$\Vert \nabla u\Vert_{L^4(\{d_\Om<\varepsilon/2\})}$ by $\Vert u\Vert_{L^\infty(\{d_\Om<\varepsilon\})}$
and $\Vert f(u)\Vert_{L^\infty(\{d_\Om<\varepsilon\})}$. We now use part (a) of the theorem 
applied with the compact set $K:=\{d_\Om\ge\varepsilon/2\}$. Finally, as explained right after 
Theorem~\ref{thm2}, the superlinearity of $f$ gives a bound for $\Vert u\Vert_{L^1(\Om)}$
which is uniform in $\la>\la^*/2$.

\subsection{Proof of Theorem~\ref{thm3}: part (b) for convex nonlinearities}
We explain here the argument of S. Villegas~\cite{V} to remove the hypothesis that $\Om$
is convex when the dimension $n=4$, at the price of assuming that $f$ is convex and satisfies
\eqref{hypf}. See~\cite{V} for all details.

We consider the minimal solution $u=u_\la$ of \eqref{pbla} with $\la<\la^*$.
Since $f$ is convex, we have that $f(u)-f(0)\le uf'(u)$. Thus, Nedev's estimate~\eqref{boundnedev}
gives a control on $\int_\Om (f(u)-f(0))f(u)/u\, dx$. From this, and since $\lim_{t\to +\infty}f(t)=+\infty$,
it is easy to deduce a uniform bound 
\begin{equation}\label{villbound}
\int_{\{u>1\}} \frac{f(u)^2}{u}\,dx\le M.
\end{equation}

Taking $t:=\Vert \nabla u\Vert_{L^4(\Om)}$ in Theorem~\ref{thm4}, we deduce
$\Vert u\Vert_{L^\infty(\Om)}\leq C \Vert \nabla u\Vert_{L^4(\Om)}$. Therefore, using that $n=4$
in the Sobolev embedding, 
\begin{eqnarray*}
\Vert u\Vert_{L^\infty(\Om)} &\leq & C\Vert \nabla u\Vert_{L^4(\Om)} \le C\Vert u\Vert_{W^{2,2}(\Om)}
\le C\Vert f(u) \Vert_{L^2(\Om)} \\
&\le &C \left( f(1)^2|\Om|+\int_{\{u>1\}} f(u)^2\, dx\right)^{1/2}\le C \left( 1
+\int_{\{u>1\}} \frac{f(u)^2}{u} u \,dx\right)^{1/2}\\
&\le &  C \left( 1+M \Vert u\Vert_{L^\infty(\Om)} \right)^{1/2},
\end{eqnarray*}
where we have used \eqref{villbound}. From this, we conclude a bound for $\Vert u\Vert_{L^\infty(\Om)}$
that is uniform in $\la$.

\subsection{Proof of Theorem~\ref{thm4}}

We make the choice \eqref{choice4} and, in particular, we take
$c=\left|\nabla u\right|$ in \eqref{stabc}. It is easy to check that, in the set 
$\left\{\left|\nabla u\right|>0\right\}$, we have
\begin{equation*}
\left(\Delta + f'(u)\right)|\nabla u|=\frac{1}{|\nabla u|}
\left(\sum_{i,j}u_{ij}^2-\sum_i\left(\sum_{j}u_{ij}\frac{u_j}
{|\nabla u|}\right)^2\right).
\end{equation*}
Taking an orthonormal basis in which the last vector is the normal $\nabla u/|\nabla u|$ to the level set
and the other vectors are the principal directions of the level set, one easily sees that the last expression
coincides with the right-hand side of \eqref{eq:grad}. Therefore, using the stability condition \eqref{stabc},
we conclude the following result of Sternberg and Zumbrun. 

\begin{proposition}[Sternberg and Zumbrun; see \cite{Cabre,Dupaigne}]
\label{Prop-semi-st}
Let $\Omega$ be a smooth bounded domain of $\R^n$ and $u$ a smooth positive 
stable solution of \eqref{pb}. Then, for every Lipschitz function 
$\eta$ in $\overline\Omega$ with $\eta|_{\partial\Omega}\equiv0$,
\begin{equation}\label{semi1}
\int_{\left\{\left|\nabla u\right|>0\right\}} 
\left( |\nabla_T |\nabla u||^2 +|A|^2|\nabla u|^2\right)\eta^2\,dx
\leq \int_\Omega |\nabla u|^2 |\nabla \eta|^2 \,dx, 
\end{equation}
where $\nabla_T$ denotes the tangential or Riemannian gradient along a 
level set of $u$ and
$$
|A|^2=|A(x)|^2=\sum_{l=1}^{n-1} \kappa_l^2,
$$
with $\kappa_l$ being the principal curvatures of the level set of $u$ passing
through $x$, for a given $x\in\Omega\cap\left\{\left|\nabla u\right|>0\right\}$.
\end{proposition}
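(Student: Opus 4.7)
The plan is to substitute $\xi=|\nabla u|\eta$ into the stability inequality \eqref{stability}, reduce it via Bochner's identity and an integration by parts to an integral inequality involving only $|D^2 u|^2$ and $|\nabla|\nabla u||^2$, and then identify the resulting integrand geometrically in an orthonormal frame adapted to the level sets of $u$. Since $u$ is smooth and $\eta$ is Lipschitz with $\eta|_{\partial\Omega}=0$, the function $\xi$ is Lipschitz with trace zero on $\partial\Omega$ and hence admissible in \eqref{stability} by a density argument (or, if one prefers, by first regularizing $|\nabla u|$ as $\sqrt{|\nabla u|^2+\varepsilon}$ and letting $\varepsilon\downarrow 0$).

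The first step would be to expand
\begin{equation*}
|\nabla \xi|^2 = |\nabla u|^2|\nabla\eta|^2 + \nabla(|\nabla u|^2)\cdot\eta\nabla\eta + \eta^2|\nabla|\nabla u||^2,
\end{equation*}
and rewrite the middle term as $\tfrac{1}{2}\nabla(|\nabla u|^2)\cdot\nabla(\eta^2)$, which integrates by parts to $-\tfrac{1}{2}\int_\Omega \eta^2\,\Delta(|\nabla u|^2)\,dx$. Combining Bochner's formula with the differentiated equation $\nabla\Delta u=-f'(u)\nabla u$ gives
\begin{equation*}
\Delta(|\nabla u|^2) = 2|D^2u|^2 + 2\nabla u\cdot\nabla\Delta u = 2|D^2u|^2 - 2f'(u)|\nabla u|^2.
\end{equation*}
Substituting this into \eqref{stability} and cancelling the matching $\int f'(u)|\nabla u|^2\eta^2\,dx$ on both sides produces
\begin{equation*}
\int_\Omega \eta^2 \bigl(|D^2 u|^2 - |\nabla|\nabla u||^2\bigr)\,dx \le \int_\Omega |\nabla u|^2|\nabla\eta|^2\,dx.
\end{equation*}

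The last step is the pointwise geometric identification that is asserted in \eqref{eq:grad}. At a point $x$ with $|\nabla u|(x)>0$, I would choose the orthonormal frame $e_1,\dots,e_{n-1},e_n$ with $e_n=\nabla u/|\nabla u|$ and $e_1,\dots,e_{n-1}$ the principal directions of the level set $\{u=u(x)\}$. In this frame the mixed derivatives $u_{nj}$ for $j<n$ are tangential derivatives of $|\nabla u|$ along the level set, while $u_{ij}=-\kappa_i|\nabla u|\delta_{ij}$ for $i,j<n$; a short bookkeeping then gives $|D^2u|^2-|\nabla|\nabla u||^2=|\nabla_T|\nabla u||^2+|A|^2|\nabla u|^2$ on $\{|\nabla u|>0\}$. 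Since Kato's inequality $|\nabla|\nabla u||\le|D^2u|$ holds a.e., the integrand on the left of the last displayed inequality is nonnegative everywhere, so restricting integration to $\{|\nabla u|>0\}$ only decreases the left-hand side, yielding \eqref{semi1}. The main obstacle I anticipate is the careful handling of the critical set $\{|\nabla u|=0\}$, dealt with by Kato and by the approximation that justifies the Lipschitz test function, together with the bookkeeping for the pointwise Hessian identity in the adapted frame.
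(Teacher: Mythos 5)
Your proposal is correct and follows essentially the same route as the paper: testing stability with $\xi=|\nabla u|\eta$, using the differentiated equation to produce the linearized operator acting on $|\nabla u|$, and identifying $|D^2u|^2-|\nabla|\nabla u||^2$ with $|\nabla_T|\nabla u||^2+|A|^2|\nabla u|^2$ in a frame adapted to the level sets. Your bookkeeping via Bochner's identity for $|\nabla u|^2$ is just a rearrangement of the paper's computation of $(\Delta+f'(u))|\nabla u|$ in \eqref{eq:grad}, with the mild advantage that it, together with Kato's inequality, handles the critical set $\{|\nabla u|=0\}$ cleanly.
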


Note that in the radial case, \eqref{semi1} reduces exactly to \eqref{radst}.

The proof of Theorem~\ref{thm4} proceeds as follows. We denote
$$
T:=\max_{\Omega}u=\left\|u\right\|_{L^\infty (\Omega)}
\quad\text{ and }\quad \Gamma_s:=\left\{x\in\Omega:u(x)=s\right\}
$$
for $s\in(0,T)$. By Sard's theorem, almost every $s\in(0,T)$ is a regular value of 
$u$. For these regular values, we thus can apply Proposition~\ref{Sobolev} with $M=\Gamma_s$. 

We first use Proposition~\ref{Prop-semi-st} with  
$\eta=\varphi(u)$, 
where $\varphi$ is a Lipschitz function in $\left[0,T\right]$ 
with $\varphi(0)=0$.
The right-hand side of \eqref{semi1} becomes
\begin{eqnarray*}
\int_{\Omega}\left|\nabla u\right|^2\left|\nabla\eta\right|^2dx&=&
\int_{\Omega}\left|\nabla u\right|^4\varphi'(u)^2dx\\
&=&\int_{0}^{T}\left(\int_{\Gamma_s}\left|\nabla u\right|^3\,dV_s
\right)\varphi'(s)^2\,ds,
\end{eqnarray*}
by the coarea formula. Thus, \eqref{semi1} can be written as
\begin{eqnarray*}
&&\hspace{-1cm}\int_{0}^{T}\left(\int_{\Gamma_s}\left|\nabla u\right|^3\,dV_s
\right)\varphi'(s)^2\,ds\\
&&\geq\int_{\left\{\left|\nabla 
u\right|>0\right\}}\left(\left|\nabla_T\left|
\nabla u\right|\right|^2+\left|A\right|^2\left|\nabla 
u\right|^2\right)\varphi(u)^2dx\\
&&=\int_{0}^{T}\left(\int_{\Gamma_s\cap\left\{\left|\nabla 
u\right|>0\right\}}\frac{1}{\left|\nabla u\right|}
\left(\left|\nabla_T\left|
\nabla u\right|\right|^2+\left|A\right|^2\left|\nabla 
u\right|^2\right)\,dV_s\right)\varphi(s)^2\,ds\\
&&=\int_{0}^{T}\left(\int_{\Gamma_s\cap\left\{\left|\nabla 
u\right|>0\right\}}
\left( 4\left|\nabla_T\left|
\nabla u\right|^{1/2}\right|^2+\left(\left|A\right|\left|\nabla 
u\right|^{1/2}\right)^{2}\right) \,dV_s\right)\varphi(s)^2\,ds .
\end{eqnarray*}
We conclude that
\begin{equation}
\int_0^T h_1(s) \varphi(s)^2 \,ds
\leq
\int_0^T h_2(s)  \varphi'(s)^2 \,ds,
\label{semi3}
\end{equation}
for all Lipschitz functions 
$\varphi:\left[0,T\right]\rightarrow\mathbb{R}$ with 
$\varphi(0)=0$, where 
\begin{equation}\label{eq:27}
h_1(s):=\int_{\Gamma_s} 
\left( 4|\nabla_T |\nabla u|^{1/2}|^2 +\left( |A||\nabla u|^{1/2} \right)^2\right) \,dV_s\, ,\quad
h_2(s):=\int_{\Gamma_s} |\nabla u|^3\,dV_s 
\end{equation}
for every regular value $s$ of $u$. 

Inequality \eqref{semi3}, with $h_1$ and $h_2$ as defined above, will lead 
to our $L^{\infty}$ estimate of Theorem \ref{thm4} after choosing an 
appropriate test function $\varphi$ in \eqref{semi3}.

The rest of the proof differs in every dimension $n=2,3,$ and 4. But in  
all three cases it will be useful to denote 
\begin{equation}\label{defst}
B_t:=\frac{1}{t^2}\int_{\left\{u<t\right\}}\left|\nabla u\right|^4dx =
\frac{1}{t^2}\int_0^th_2(s)\,ds,
\end{equation}
where $t>0$ is a given positive constant as in the statement of the 
theorem. Note that the quantity $B_t$ is the main part of the right-hand 
side of our estimate \eqref{est4}. Let us start with the

\smallskip

\underline{{\it Cases} $n=2$ {\it and} 3}. In these dimensions we take the simple test function 
\begin{equation*}\label{eq:212}
\varphi(s)=\left\{
\begin{array}{lll}
s/t&\textrm{if}&s\leq t\\
1 &\textrm{if}&s>t.
\end{array}
\right.
\end{equation*}
With this choice of $\varphi$ and since 
$h_1(s)\geq\int_{\Gamma_s}\left|A\right|^2\left|\nabla u\right|\,dV_s\ $ 
---see definition \eqref{eq:27}---, inequality \eqref{semi3} leads to
\begin{equation}\label{eq:224}
\int_t^T\int_{\Gamma_s}
\left|A\right|^2\left|\nabla u\right|\,dV_s\,ds
\leq \int_0^th_2(s)\frac{1}{t^2}\,ds = \frac{1}{t^2}
\int_{\left\{u<t\right\}}\left|\nabla u\right|^4\,dx=:B_t.
\end{equation}

Next, we use a well known geometric inequality for the curve 
$\Gamma_s$ ($n=2$) or the surface $\Gamma_s$ ($n=3$). It also 
holds in every dimension $n\geq2$ and it states
\begin{equation}\label{eq:225}
\left|\Gamma_s\right|^{\frac{n-2}{n-1}}\leq C_n \int_{\Gamma_s}
\left|H\right|\,dV_s,
\end{equation}
where $H$ is the mean curvature of $\Gamma_s$, $C_n$ is a constant 
depending only on $n$, and $s$ is a regular value of $u$. In 
dimension $n=2$ this simply follows from the Gauss-Bonnet formula. 
For $n\geq3$, \eqref{eq:225} follows from the Michael-Simon and Allard Sobolev 
inequality (Proposition \ref{Sobolev} above) applied with $p=1$ and $v\equiv 1$.

We also use the classical isoperimetric inequality,
\begin{equation}\label{eq:226}
V(s):=\left|\left\{u>s\right\}\right|\leq 
C_n\left|\Gamma_s\right|^{\frac{n}{n-1}}.
\end{equation}
Now, \eqref{eq:225} and 
\eqref{eq:226} lead to 
\begin{equation*}
V(s)^{\frac{n-2}{n}}\leq C_n \int_{\Gamma_s}\left|H\right|\,dV_s
\leq C_n\left(\int_{\Gamma_s}\left|A\right|^2\left|\nabla u\right| 
\,dV_s\right)^{1/2}\left(\int_{\Gamma_s}\frac{dV_s}
{\left|\nabla u\right|}\right)^{1/2}
\end{equation*}
for all regular values $s$,
by Cauchy-Schwarz and since $\left|H\right|\leq\left|A\right|$. 
{From} this, we deduce 
\begin{eqnarray}\label{eq:227}
T-t=\int_t^T \,ds&\leq& \\ 
&  & \hspace{-3cm} \leq \ \int_t^T C_n\left(\int_{\Gamma_s}
\left|A\right|^2\left|\nabla u\right| 
\,dV_s\right)^{1/2}\left(V(s)^{\frac{2(2-n)}{n}}
\int_{\Gamma_s}\frac{dV_s}
{\left|\nabla u\right|}\right)^{1/2} \,ds\nonumber \\
\nonumber
&  & \hspace{-3cm} \leq \ C_n\left(\int_t^T \int_{\Gamma_s}
\left|A\right|^2\left|\nabla u\right| 
\,dV_s\,ds\right)^{1/2}\left(\int_t^T V(s)^{\frac{2(2-n)}{n}}
\int_{\Gamma_s}\frac{dV_s}
{\left|\nabla u\right|}\,ds\right)^{1/2}\\
\label{eq:227bis}
&  & \hspace{-3cm} \leq \ C_nB_t^{1/2}\left(\int_t^T
V(s)^{\frac{2(2-n)}{n}}\int_{\Gamma_s}\frac{dV_s}
{\left|\nabla u\right|}\,ds\right)^{1/2},
\end{eqnarray}
where we have used \eqref{eq:224} in the last inequality. 

Finally, since $V(s)=\left|\left\{u>s\right\}\right|$ is a 
nonincreasing function, it is differentiable almost everywhere 
and, by the coarea formula, 
\begin{equation*}
-V'(s)=\int_{\Gamma_s}\frac{dV_s}{\left|\nabla u\right|}
\qquad \text{for a.e.}\ s\in(0,T).
\end{equation*}
In addition, for $n\leq3$, $V(s)^{\frac{4-n}{n}}$ is 
nonincreasing in $s$ and thus its total variation satisfies
\begin{eqnarray*}
\left|\Omega\right|^{\frac{4-n}{n}}&\geq& V(t)^{\frac{4-n}{n}}
=\left[V(s)^{\frac{4-n}{n}}\right]^{s=t}_{s=T}\\
&\geq&\int_t^T \frac{4-n}{n} V(s)^{\frac{2(2-n)}{n}}
\left(-V'(s)\right)\,ds \\
&=&\frac{4-n}{n}\int_t^T V(s)^{\frac{2(2-n)}{n}}\int_{\Gamma_s}
\frac{dV_s}{\left|\nabla u\right|}\,ds.
\end{eqnarray*}
{From} this, \eqref{eq:227}, and \eqref{eq:227bis}, we conclude the desired inequality 
\begin{equation}
T-t\leq C_n B_t^{1/2} \left|\Omega\right|^{(4-n)/(2n)}
\end{equation}
for $n\leq3$.

Note that this argument gives nothing for $n\geq 4$ since the
integral in \eqref{eq:227bis}, 
\begin{equation}
\int_t^T V(s)^{\frac{2(2-n)}{n}}\left(-V'(s)\right)\,ds 
= \int_0^{V(t)} \frac{dr}{r^{\frac{2(n-2)}{n}}},
\end{equation}
is not convergent at $s=T$ (i.e., $r=0$) because $2(n-2)/n\geq1$.

We now turn to the 

\smallskip

\underline{\it Case $n=4$}. For $n\ge 4$, we apply the Michael-Simon and Allard 
Sobolev inequality \eqref{MSsob} with $M=\Gamma_s$, $p=2<m=n-1$, and 
$v= \left|\nabla u\right|^{1/2}$. Note that we have  $\left|H\right|\leq\left|A\right|$. 
Recalling \eqref{eq:27}, we obtain
\begin{equation}\label{estimate2}
\left( \int_{\Gamma_s} |\nabla u|^\frac{n-1}{n-3} \ dV_s \right)^\frac{n-3}
{n-1}\leq C_n h_1(s)
\end{equation}
for all regular values $s$ of $u$. This estimate combined with \eqref{semi3} leads to 
\begin{equation}\label{estgenn}
\int_{0}^{T}\left(\int_{\Gamma_s} |\nabla u|^\frac{n-1}{n-3} \,dV_s 
\right)^\frac{n-3}{n-1}\varphi(s)^2\,ds\leq C_n\int_{0}^{T}\left(\int_{\Gamma_s} 
|\nabla u|^3 \,dV_s \right)\varphi'(s)^2\,ds
\end{equation}
for all Lipschitz functions $\varphi$ in $[0,T]$ with $\varphi(0)=0$. We only know how 
to derive an $L^{\infty}$ estimate for $u$ (i.e., a bound on 
$T=\max_{\Omega}u$) from \eqref{estgenn} when the exponent $(n-1)/(n-3)$ on 
its left-hand side is larger than or equal to the one on the right-hand side, 
i.e., 3. That is, we need $(n-1)/(n-3)\geq3$, which means $n\leq4$.

Therefore, taking $n=4$, then $(n-1)/(n-3)=3$ and \eqref{estimate2} becomes
\begin{equation}\label{cont4}
h_2^{1/3}\leq C h_1 \qquad\text{a.e. in}\ (0,T)\ (\text{when}\ n=4),
\end{equation}
where $C$ is a universal constant. 

To proceed, the idea is to choose a function $\varphi$ that tries to violate the inequality \eqref{semi3},
by imposing $h_1\varphi^2= 2 h_2 \varphi'^2$. This ODE will lead to the choice \eqref{eq:211} below. 
The ODE will not be satisfied in all $[0,T]$ since we must have $\varphi (0)=0$. For this reason, 
\eqref{semi3} will not be violated, but instead will lead to an estimate.

For every regular value $s$ of $u$, we 
have $0<h_2(s)$ and $h_1(s)<\infty$ (simply by their definition). 
This together with \eqref{cont4} gives $h_1/h_2\in\left(0,+\infty\right)$ 
a.e. in $\left(0,T\right)$. Thus, defining 
$$g_k(s):=\min\left( k,\frac{h_1(s)}{h_2(s)}\right)$$
for regular values $s$ and for a positive integer $k$, we have that 
$g_k\in L^{\infty}(0,T)$ and 
\begin{equation}\label{gknicr}
g_k(s)\nearrow\frac{h_1(s)}{h_2(s)}\in(0,+\infty) \qquad\text{as}\ 
k\uparrow\infty,\ \text{for}\ \text{a.e.}\ s\in(0,T).
\end{equation}
Since $g_k\in L^{\infty}(0,T)$, the function 
\begin{equation}\label{eq:211}
\varphi_k(s):=\left\{
\begin{array}{lll}
s/t&\textrm{if}&s\leq t,\\
\displaystyle \exp\left(
\frac{1}{\sqrt{2}}\int_t^s \sqrt{g_k(\tau)}\ d\tau\right)
&\textrm{if}&t<s\leq T,
\end{array}
\right.
\end{equation}
is well defined, Lipschitz in $\left[0,T\right]$, and satisfies 
$\varphi_k(0)=0$.

Since
$$h_2(\varphi_k')^2=h_2\frac{1}{2}g_k\varphi_k^2\leq\frac{1}{2}
h_1\varphi_k^2\qquad \text{in}\ (t,T),$$
\eqref{semi3} used with $\varphi=\varphi_k$ leads to
\begin{equation}\label{eq:213}
\int_t^T h_1\varphi_k^2 \,ds\leq \frac{2}{t^2}\int_0^t h_2\,ds=
\frac{2}{t^2}\int_{\{u< t\}}|\nabla u|^4\  dx=2 B_t.
\end{equation}
Recall that $B_t$ was defined in \eqref{defst} and that we 
need to establish $T-t\leq C B_t^{1/2}$. 

By \eqref{gknicr} we have
\begin{equation}\label{eq:214}
T-t=\int_t^T\,ds=\sup_{k\geq1}\int_t^T\sqrt[4]{\frac{h_2g_k}{h_1}}\,ds.
\end{equation}
Using \eqref{eq:213} and Cauchy-Schwarz, we have that
\begin{eqnarray}\label{eq:218}
\int_t^T\sqrt[4]{\frac{h_2g_k}{h_1}}\,ds&=&\int_t^T\left(\sqrt{h_1}
\varphi_k\right)\left(\sqrt[4]{\frac{h_2g_k}{h_1^3}}\frac{1}
{\varphi_k}\right)\,ds\\
&\leq& \left(2B_t\right)^{1/2}\left(\int_t^T\sqrt{\frac{h_2g_k}{h_1^3}}
\frac{1}{\varphi_k^2}\,ds\right)^{1/2} \nonumber \\ 
\label{2.20}
&\leq& \left(2B_t\right)^{1/2}\left(C\int_t^T\sqrt{g_k}
\frac{1}{\varphi_k^2}\,ds\right)^{1/2}. 
\end{eqnarray}
In the last inequality we have used our crucial estimate \eqref{cont4}.

Finally we bound the integral in \eqref{2.20}, using the definition 
\eqref{eq:211} of $\varphi_k$, as follows:
\begin{eqnarray*}
\int_t^T\sqrt{g_k}\frac{1}{\varphi_k^2}\,ds &=& \int_t^T\sqrt{g_k}
\frac{1}{\varphi_k^2}\frac{\varphi_k'}{\frac{1}{\sqrt{2}}
\sqrt{g_k}\varphi_k}\,ds\\
&=& \sqrt{2}\int_t^T\frac{\varphi'_k}{\varphi_k^3}\,ds = 
\frac{\sqrt{2}}{2}\left[\varphi_k^{-2}(s)\right]_{s=T}^{s=t} \\
&\leq&\frac{\sqrt{2}}{2}\varphi_k^{-2}(t)=\frac{\sqrt{2}}{2}.  
\end{eqnarray*}
This bound, together with \eqref{eq:214},\eqref{eq:218}, and \eqref{2.20}, finishes
the proof in dimension~$4$.

\end{document}